\definecolor{rouge}{rgb}{0.85,0.1,.4}
\definecolor{bleu}{rgb}{0.1,0.2,0.9}
\definecolor{violet}{rgb}{0.7,0,0.8}
\newcommand{\V}{\mathbb{V}}
\renewcommand{\L}{\mathbb{L}}
\newcommand{\W}{\mathscr{W}}
\newcommand{\bra}{{\langle}}
\newcommand{\ket}{{\rangle}}
\newcommand{\Lam}{\Lambda}
\newcommand{\cprime}{$'$}
\newcommand{\on}{\operatorname}
\newcommand{\+}{\mathop{\oplus}}
\newcommand{\mc}{\mathcal}
\newcommand{\mf}{\mathfrak}
\newcommand{\fing}{\mf{g}}
\newcommand{\affg}{\widehat{\mf{g}}}
\newcommand{\affh}{\widehat{\mf{h}}}
\newcommand{\Z}{\mathbb{Z}}
\newcommand{\C}{\mathbb{C}}
\newcommand{\ra}{\rightarrow}
\newcommand{\lam}{\lambda}
\newcommand{\vac}{{|0\rangle}}
\newcommand{\bs}{\boldsymbol}
\def\g{\mathfrak{g}}
\def\h{\mathfrak{h}}
\def\n{\mathfrak{n}}
\def\O{\mathbb{O}}
\def\P{\mathscr{P}}
\def\sl{\mathfrak{sl}}
\def\so{\mathfrak{so}}
\def\sp{\mathfrak{sp}}
\def\d{d}
\def\bs{\boldsymbol}
\def\P{\mathscr{P}}
\def\le{\leqslant}
\def\ge{\geqslant}
\def\geq{\geqslant}
\DeclareMathOperator{\ad}{ad}
\newcommand{\DeltaLong}{\Delta_{\text{long}}}
\newcommand{\DeltaShort}{\Delta_{\text{short}}}
\newcommand{\Deltahatre}{\widehat{\Delta}^{\text{re}}}
\DeclareMathOperator{\Adm}{Adm}
\newcommand{\thetashort}{\theta_{\text{s}}}
\theoremstyle{theorem}
\newtheorem{Th}{Theorem}[section]
\newtheorem{Pro}[Th]{Proposition}
\newtheorem{lemma}[Th]{Lemma}
\theoremstyle{remark}
\newlength{\larg}
\title[Nilpotent orbits arising from admissible representations]
{On the nilpotent orbits arising from admissible affine vertex algebras}
\subjclass[2010]{17B08, 17B10, 17B67,	81R10,	17B69}
\keywords{}
\author[Tomoyuki Arakawa]{Tomoyuki Arakawa\textsuperscript{1}}
\address{\textsuperscript{1}Research Institute for Mathematical Sciences
\\ Kyoto University\\ 
 Kyoto 606-8502 JAPAN}
\email{arakawa@kurims.kyoto-u.ac.jp}
\author[Jethro van Ekeren]{Jethro van Ekeren\textsuperscript{2}}
\address{\textsuperscript{2}
Departamento de Matem\'{a}tica Aplicada\\
Instituto de Matem\'{a}tica e Estat\'{i}stica\\
Universidade Federal Fluminense\\
Niter\'{o}i, RJ 24.210-201 BRAZIL}
\email{jethrovanekeren@gmail.com}
\author[Anne Moreau]{Anne Moreau\textsuperscript{3}}
\address{\textsuperscript{3}Facult\'{e} des Sciences d'Orsay\\
Universit\'{e} Paris-Saclay\\
91405 Orsay FRANCE}
\email{anne.moreau@universite-paris-saclay.fr }
\begin{document}

\maketitle

\begin{abstract}
We give a simple description of the closure of the nilpotent orbits appearing as associated varieties of 
admissible affine vertex algebras in terms of primitive ideals.
\end{abstract}
\section{Introduction}

The notion of associated variety of a primitive ideal in a universal enveloping algebra  is a widely used and very useful one in representation theory, permitting the application of geometric methods, see e.g., \cite{Vog91}. Recently, a similar notion has been introduced for vertex algebras 
(\cite{Ara12}) which has turned out to be useful not only in the representation theory of vertex algebras (e.g.,\cite{A2012Dec,AM15,AraKaw18,EkeHel,AEkeren19})
%\commentA{Are you sure it is \cite{AEkeren19}, not  \cite{AvEM}?} 
but also in connection with {four dimensional $\mc{N}=2$ superconformal field theories} via the 4D/2D correspondence discovered in \cite{BeeLemLie15},
see e.g., \cite{SonXieYan17, Son17, BeeRas,AraMT,BonMenRas19,PanPee20,Ded,XieYan2019lisse}.

Let $\g$ be a simple complex Lie algebra and $G$ the corresponding adjoint algebraic group. For any choice of level $k$ the associated variety $X_{\L_k(\g)}$ of the simple affine vertex algebra $\L_k(\g)$ is a conic, $G$-invariant 
subvariety of $\g^*$.
While the associated variety of a primitive ideal in $U(\g)$ is always contained in the nilpotent cone $\mc{N} \subset \g^*$, this is not necessarily the case for the variety $X_{\L_k(\g)}$. 
Nevertheless, 
it was shown in \cite{Ara09b} that 
$X_{\L_k(\g)}$ is the closure of a nilpotent orbit $\O_q$, 
which depends only on the denominator $q\in \Z_{\geq 1}$ of the rational number $k$
whenever $\L_k(\g)$ is an admissible representation \cite{KacWak89} of the affine Kac-Moody algebra
$\widehat{\g}$.
More explicitly,
we have
\begin{align*}
\overline{\O}_q=
%X_{\L_k(\g)} =
\begin{cases}
\mc{N}_q & \text{if }(q,\check{r})=1,\\
{}^L\mc{N}_{q/\check{r}} & \text{if }(q,\check{r})\ne 1,
\end{cases}
\end{align*}
where by definition
\begin{align}
\label{eq:Nq}
\begin{split}
\mc{N}_q &= \{x\in \g \colon (\ad x)^{2q}=0\}, \\
\text{and} \quad {}^L\mc{N}_q &= \{x\in \g \colon \pi_{\thetashort}(x)^{2q} = 0\},
\end{split}
\end{align}
and $\pi_{\theta_{\text{s}}}$ is the irreducible finite-dimensional representation of $\g$
with highest weight $\thetashort$,
$\thetashort$ is the highest short root of $\g$ and $\check{r}$ is the lacing number of $\g$, that is, the maximal number of edges in the Dynkin diagram of $\g$. The irreducibility of the varieties of \eqref{eq:Nq} was checked 
in \cite{Ara09b} so that the nilpotent orbit $\O_q$ is indeed well-defined. 
Prior to \cite{Ara09b},
the irreducibility of variety $\overline{\O}_q$ had been proved in \cite{Geo04} 
in most cases
%for the case 
%$(q,\check{r})=1$ and for types $G_2$ and $F_4$ for the case $(q,\check{r})\ne 1$,
and
the orbits $\O_q$ were studied in  \cite{ElaKacVin08}
as 
exceptional nilpotent orbits for those  
of principal Levi type and
$(q,\check{r})=1$.

In connection with the 4D/2D correspondence mentioned above,
the variety $\overline{\O}_q$ also appears as the Higgs branch of some 
{A}rgyres-{D}ouglas theories \cite{Dan,SonXieYan17,WanXie}.
%Prior to \cite{Ara09b},
%the irreducibility of variety $\overline{\O}_q$ had been proved in \cite{Geo04} 
%in most cases
%%for the case 
%%$(q,\check{r})=1$ and for types $G_2$ and $F_4$ for the case $(q,\check{r})\ne 1$,
%and
%the orbits $\O_q$ were studied in  \cite{ElaKacVin08}
%as 
%exceptional nilpotent orbits for those  
%of principal Levi type and
%$(q,\check{r})=1$.

The purpose of this note is to give a 
simple description of the  orbit $\O_q$ in terms of primitive ideals. It is well known that the associated variety of the annihilator of an integral highest weight representation of $\g$ is the closure of a {\emph{special} \cite{Lus79}} nilpotent orbit (\cite{BarVog82,BarVog83}). Since the nilpotent orbit $\O_q$ is not special in general, we need to consider non-integral highest weight representations.

Let $\rho$ be the half sum of positive roots of $\g$ and 
$\check{\rho}$ the half sum of positive coroots of $\g$. We denote by $J_{\lam}\subset U(\g)$ the annihilator of the simple highest weight representation $L(\lam)$
of $\g$ with highest weight $\lam$, and by $\on{Var}(I)$ the associated variety of a primitive ideal $I \subset U(\g)$. Our main result is the following.
\begin{Th}\label{MainThm}
Let $q$ be a positive  integer, and set
\begin{align*}
\lam_q=\begin{cases}
\rho/q-\rho&\text{if }(q,\check{r})=1,\\
\check{\rho}/q-\rho&\text{if }(q,\check{r})\ne 1.
\end{cases}
\end{align*}
%\commentT{I removed the bar from $\lam_q$.}
Then  $$\on{Var}(J_{ \lam_q})=\overline{\O}_q.$$
%\begin{enumerate}
%\item Suppose that $(q,\check{r})=1$. Then
%\begin{align*}
%\on{Var}(J_{\rho/q-\rho})=\overline{\O}_q.
%\end{align*}
%\item Suppose that $(q,\check{r})\ne 1$.
%Then
%\begin{align*}
%\on{Var}(J_{\check{\rho}/q-\rho})=\overline{\O}_q.
%\end{align*}
%\end{enumerate}
\end{Th}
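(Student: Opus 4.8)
The plan is to establish the two inclusions separately and glue them with the irreducibility of $\overline{\O}_q$ recorded above: the inclusion $\on{Var}(J_{\lam_q})\subseteq\overline{\O}_q$ will come from the vertex-algebra side, and $\on{Var}(J_{\lam_q})\supseteq\overline{\O}_q$ from a Gelfand--Kirillov dimension count.

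For the first inclusion, fix $p$ so that $k=-h^\vee+p/q$ is admissible of denominator exactly $q$ --- principal admissible when $(q,\check r)=1$ and coprincipal otherwise --- and consider the affine weight $\widehat\lam=k\Lam_0+\lam_q$ of level $k$. One checks from the Kac--Wakimoto criterion that $\widehat\lam$ is admissible for suitable $p$, and by construction its classical ($\g$-)part is exactly $\lam_q$; a weight of this shape underlies the lower-bound argument of \cite{Ara09b}. Since $L(\widehat\lam)$ is then an ordinary $\L_k(\g)$-module, its top component $L(\lam_q)$ is a module over the Zhu algebra $\Zhu(\L_k(\g))=U(\g)/\mc I_k$, whence $\mc I_k\subseteq J_{\lam_q}$. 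Feeding this into Zhu's Poisson surjection $R_{\L_k(\g)}\twoheadrightarrow\gr\Zhu(\L_k(\g))$, which yields $\on{Var}(\mc I_k)\subseteq X_{\L_k(\g)}$, and using $X_{\L_k(\g)}=\overline{\O}_q$ from \cite{Ara09b}, we get $\on{Var}(J_{\lam_q})\subseteq\on{Var}(\mc I_k)\subseteq X_{\L_k(\g)}=\overline{\O}_q$.

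For the reverse inclusion it suffices, by irreducibility of $\overline{\O}_q$, to prove $\dim\on{Var}(J_{\lam_q})\ge\dim\O_q$. Here the point is that $\lam_q+\rho$ equals $\tfrac1q\rho$ (resp.\ $\tfrac1q\check\rho$), which is dominant and regular; consequently the block $\mc O_{\lam_q}$ is Soergel-equivalent to the principal block of a semisimple Lie algebra $\g_{[\lam_q]}$ whose root system is the integral root subsystem $\Delta_{[\lam_q]}=\{\alpha\in\Delta:\langle\lam_q+\rho,\alpha^\vee\rangle\in\Z\}$, and under this equivalence $L(\lam_q)$ corresponds to the trivial $\g_{[\lam_q]}$-module; transporting the classical Bernstein--Gelfand--Gelfand resolution back (the equivalence carries Verma modules to Verma modules) gives a resolution of $L(\lam_q)$ by the Verma modules $M(w\circ\lam_q)$, $w\in W_{[\lam_q]}$. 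Reading off the leading term of the resulting character formula yields $\on{GKdim}L(\lam_q)=\dim\n-|\Delta_{[\lam_q]}^{+}|$, hence $\dim\on{Var}(J_{\lam_q})=2\on{GKdim}L(\lam_q)=\dim\g-\dim\g_0$ with $\g_0=\h\oplus\bigoplus_{\alpha\in\Delta_{[\lam_q]}}\g_\alpha$. It then remains to identify this with $\dim\O_q$: since $\Delta_{[\lam_q]}=\{\alpha:\langle\rho,\alpha^\vee\rangle\in q\Z\}$ (resp.\ the set built from $\check\rho$), $\g_0$ is precisely the degree-zero part of the $\Z/q\Z$-grading of $\g$ by coroot height modulo $q$ studied in \cite{ElaKacVin08}, whose distinguished nilpotent orbit is $\O_q$ and satisfies $\dim\O_q=\dim\g-\dim\g_0$. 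Thus $\dim\on{Var}(J_{\lam_q})=\dim\O_q$, which combined with the first inclusion forces $\on{Var}(J_{\lam_q})=\overline{\O}_q$.

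The crux --- the step I expect to cost the most --- is this last identification, made uniform over all simple $\g$. When $\Delta_{[\lam_q]}$ is $W$-conjugate to a standard Levi, $L(\lam_q)$ is parabolically induced and $\on{Var}(J_{\lam_q})$ is a Richardson orbit closure, matching the ``principal Levi type'' orbits of \cite{ElaKacVin08}; but in general $\Delta_{[\lam_q]}$ is a genuine pseudo-Levi (already $A_1^{4}\subset D_4$ at $q=2$), so one must use the cyclic-grading results of \cite{ElaKacVin08} directly and check that the coroot-height gradings are of the kind for which the dense-orbit dimension formula holds. The coprincipal case $(q,\check r)\neq 1$ is the other sensitive point: there ${}^L\mc N_{q/\check r}$ enters through $\pi_{\thetashort}$, so one must match the $\rho$-versus-$\check\rho$ bookkeeping --- equivalently, pass to the Langlands dual --- against the explicit description of $\overline{\O}_q$ in \cite{Ara09b}. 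The remaining ingredients (existence of the admissible weight with classical part $\lam_q$, the identity $\Zhu(\L_k(\g))\cong U(\g)/\mc I_k$, and the equality $\dim\on{Var}(J_\mu)=2\on{GKdim}L(\mu)$ for highest-weight modules) are standard, following from \cite{KacWak89,Ara09b} and classical results on primitive ideals.
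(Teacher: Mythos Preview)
Your architecture matches the paper's exactly: the inclusion $\on{Var}(J_{\lam_q})\subset\overline{\O}_q$ via the Zhu algebra of $\L_k(\g)$, a dimension count to force equality, and irreducibility of $\overline{\O}_q$ to conclude. Two points of comparison are worth flagging.

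First, for the dimension formula you go through Soergel's equivalence and a transported BGG resolution to reach $\dim\on{Var}(J_{\lam_q})=\dim\mc{N}-|\Delta(\lam_q)|$. The paper simply cites Joseph \cite[Corollary~3.5]{Jos78}, valid because $\lam_q$ is regular dominant; your route is correct but considerably heavier than necessary.

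Second, and more substantively: for the identification $|\Delta(\lam_q)|=\dim\mc{N}-\dim\O_q$ you invoke a uniform statement from \cite{ElaKacVin08}, namely that the $\Z/q\Z$-grading by (co)root height has $\O_q$ as its ``distinguished nilpotent orbit'' with $\dim\O_q=\dim\g-\dim\g_0$. This is exactly the equality to be proved, and \cite{ElaKacVin08} does not supply it in the generality you need: as the introduction notes, that paper identifies $\O_q$ as its exceptional nilpotent only in the principal-Levi-type, $(q,\check r)=1$ cases. What \cite{ElaKacVin08} does provide uniformly is the combinatorial formula for $\#\{\alpha:q\mid\on{ht}(\alpha)\}=d_\g(q)-\on{rk}(\g)$; the paper then verifies $d_\g(q)=\dim\g^f$ for $f\in\O_q$ type by type, using the explicit partitions (classical types) and Bala--Carter labels (exceptional types) of $\O_q$ tabulated in \cite{Ara09b}, together with a small lemma $d_{\check\g}(q)=d_\g(q)$ to pass between root and coroot heights. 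You have correctly located this as the crux and anticipated the pseudo-Levi and Langlands-dual subtleties, but the clean ``dense-orbit dimension formula'' you hope to quote is not available off the shelf; expect to do the case check.
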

The proof of Theorem \ref{MainThm} 
goes as follows.
We 
use the representation theory of admissible {affine vertex algebras}
to show that   $\on{Var}(J_{\lam_q})$
%or  $\on{Var}(J_{\check{\rho}/q-\rho})$ 
is contained in $\overline{\O}_q$.
Then we show that 
the dimension of $\on{Var}(J_{ \lam_q})$
%or $\on{Var}(J_{\check{\rho}/q-\rho})$ 
is equal to 
that of $\overline{\O}_q$ using the formulas obtained in \cite{ElaKacVin08}. Since both varieties are irreducible, the equality follows. 
%$\on{Var}(J_\lam)$.

Let $k = -\check{h} + p/q$ be an admissible level and 
$\L_k(\lambda)$ the irreducible representation of $\widehat{\g}$ of highest weight $\lambda + k\Lambda_0$.
By \cite{A12-2}, the irreducible $\widehat{\g}$-module
$\L_k(\lambda)$ is a module over the vertex algebra $\L_k(\g)$ if and only if 
$\lam+k\Lam_0$ is an admissible weight whose integral root system is isomorphic to that 
of $k\Lam_0$. 
%If this is the case, we have
%\begin{align*}
%\on{Var}(J_{\lam})\subset \overline{\mathbb{O}}_q,\quad 
%| \lam+\rho|\geq | \lam_q+\rho|,
%\end{align*}
%and the equality holds if and only if 
%$\on{Var}(J_{ \lam})= \overline{\mathbb{O}}_q$
%and $ J_{\lam}= J_{\lam_q}$.
%In particular, $\L_k(\lambda_q)$ is a simple representation of the vertex algebra $\L_k(\g)$. 
%%where $\bar{\lambda}$ is the weight $\rho/q-\rho$ or $\check{\rho}/q-\rho$ of Theorem \ref{MainThm}. 
This vertex algebra is quasi-lisse \cite{Ara09b} (see \cite{AraKaw18} for the definition), and it is known that the characters, that is to say graded dimensions, of ordinary modules of lisse, and more generally quasi-lisse, vertex algebras have modular invariance properties \cite{Zhu96,Miyamoto,AraKaw18}. This structure, in turn, permits the application of techniques from the theory of modular forms to the representation theory of such vertex algebras. In our recent work \cite{AvEM} we have applied such techniques to discover and prove isomorphisms between affine vertex algebras and affine $W$-algebras known as collapsing levels. In this context the minimal value among the conformal dimensions of ordinary modules of a quasi-lisse vertex algebra is an important invariant which controls asymptotic behaviour of the characters, and the results of the present paper relate to this circle of ideas through the following result: 
\begin{Pro}[{\cite[Proposition 3.12]{AvEM}}]
\label{prop:min_conformal_Lk}
Assume that $k$ is admissible and that 
$\lam+k\Lam_0$ is an admissible weight whose integral root system is isomorphic to that 
of $k\Lam_0$. Then 
%If this is the case, we have
\begin{align*}
\on{Var}(J_{\lam})\subset \overline{\mathbb{O}}_q,\quad 
| \lam+\rho|\geq | \lam_q+\rho|,
\end{align*}
and the equality holds if and only if 
$\on{Var}(J_{ \lam})= \overline{\mathbb{O}}_q$
and $ J_{\lam}= J_{\lam_q}$.
In particular, $\L_k(\lambda_q)$ is a simple representation of the vertex algebra $\L_k(\g)$ and 
possesses the minimal conformal dimension
among the simple $\L_k(\g)$-modules that belong to the category $\mc{O}_k$.
\end{Pro}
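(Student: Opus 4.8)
The plan is to establish the three assertions in sequence, reducing the characterisation of equality to the classification of principal admissible weights together with Theorem \ref{MainThm}.

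\emph{The inclusion.} First I would pass through the Zhu algebra. Writing $\Zhu(\L_k(\g)) = U(\g)/I_k$, the canonical surjection from Zhu's $C_2$-algebra $R_{\L_k(\g)}$ onto $\gr \Zhu(\L_k(\g)) = S(\g)/\gr I_k$ yields a closed embedding $\on{Var}(I_k) \hookrightarrow X_{\L_k(\g)} = \overline{\O}_q$. By hypothesis and the criterion of \cite{A12-2}, $\L_k(\lambda)$ is a module over the vertex algebra $\L_k(\g)$; hence its top $L(\lambda)$ is a $\Zhu(\L_k(\g))$-module, so $I_k \subseteq J_\lambda$, and therefore $\on{Var}(J_\lambda) \subseteq \on{Var}(I_k) \subseteq \overline{\O}_q$.

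\emph{The norm inequality.} The conformal dimension of $\L_k(\lambda)$ is $h_\lambda = (|\lambda+\rho|^2 - |\rho|^2)/(2(k+\check{h}))$, and since $k + \check{h} = p/q > 0$, minimising $h_\lambda$ over $\mc O_k$ is equivalent to minimising $|\lambda + \rho|$. I would then invoke the Kac--Wakimoto parametrisation of principal admissible weights: when $(q,\check{r}) = 1$ every such $\lambda$ can be written so that $\lambda + \rho = \frac1q\, \bar y(\mu + \rho)$ for some $\bar y \in W$ and some dominant integral $\mu$ of level $p - \check{h}$ (in the coprincipal case $(q,\check{r})\neq 1$ the same holds with $\rho$ replaced by $\check{\rho}$ and $\mu$ running over the corresponding dual data). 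Weyl invariance of the norm gives $|\lambda + \rho| = \frac1q|\mu + \rho|$, and $|\mu+\rho|^2 = |\rho|^2 + |\mu|^2 + 2(\mu,\rho) \geq |\rho|^2$ with equality exactly when $\mu = 0$; since $\lambda_q$ corresponds to $\mu = 0$ and $\bar y = e$, this proves $|\lambda + \rho| \geq |\lambda_q + \rho|$.

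\emph{The equality.} If $J_\lambda = J_{\lambda_q}$, then $J_\lambda \cap Z(\g) = J_{\lambda_q}\cap Z(\g)$, so the two modules share a central character and $\lambda + \rho \in W(\lambda_q + \rho)$, whence $|\lambda + \rho| = |\lambda_q + \rho|$; combined with $\on{Var}(J_\lambda)=\on{Var}(J_{\lambda_q})=\overline{\O}_q$ (Theorem \ref{MainThm}) this gives the backward implication. Conversely, equality of norms forces $\mu = 0$ above, so $\lambda$ has central character $\frac1q\rho$ (resp. $\frac1q\check{\rho}$), the same as $\lambda_q$. The admissibility condition, applied to the finite positive coroots, requires $\langle\lambda + \rho, \alpha^\vee\rangle \notin \Z_{\leq 0}$, hence $\langle\lambda+\rho,\alpha^\vee\rangle \in \Z_{>0}$ for every positive root $\alpha$ integral for $\lambda + \rho$; thus $\lambda$ is integrally dominant and regular, and $J_\lambda$ is the unique primitive ideal of maximal associated variety in its block. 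As $\lambda_q$ enjoys the same property, $J_\lambda = J_{\lambda_q}$, and $\on{Var}(J_\lambda) = \overline{\O}_q$ by Theorem \ref{MainThm}. Finally, $\lambda_q$ is itself admissible of the required type, so $\L_k(\lambda_q)$ is a simple $\L_k(\g)$-module lying in $\mc O_k$, and minimality of $|\lambda_q + \rho|$ translates into minimality of its conformal dimension.

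The main obstacle I anticipate is organising the Kac--Wakimoto parametrisation into the precise normed form $\lambda + \rho = \frac1q\,\bar y(\mu+\rho)$ uniformly across the principal and coprincipal cases, and, in the equality analysis, matching the affine admissibility condition with integral dominance for the finite integral Weyl group so as to invoke the uniqueness of the primitive ideal of maximal associated variety within a block.
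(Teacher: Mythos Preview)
The paper does not prove this proposition; it is quoted from \cite{AvEM}. So there is no in-paper argument to compare against, and I can only assess your proof on its own terms.

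Your inclusion argument is correct and is exactly the mechanism behind Proposition~\ref{Pro:included}.

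The norm inequality step contains a misattribution. The Kac--Wakimoto parametrisation of $\Adm^k_\circ$ gives $\lambda+\rho=\bar y(\nu+\rho)-(p/q)\eta$ with $\nu$ dominant integral of level $\le p-\check h$ and $\eta\in\check P$, not the formula $\lambda+\rho=\tfrac1q\bar y(\mu+\rho)$ you wrote. (Already for $\widehat{\mathfrak{sl}}_2$ at $k=-1/2$ the weight $\lambda+\rho=2\varpi_1$ lies in $\Adm^k_\circ$ but $q(\lambda+\rho)=4\varpi_1$ is not of the form $\bar y(\mu+\rho)$ with $\mu$ of level $p-\check h=1$.) What \emph{is} true in the principal case is that $q(\lambda+\rho)$ is a regular element of $P$---this follows directly from the shape of $\widehat\Delta(\hat\lambda)$---and then your inequality $|\lambda+\rho|\ge|\rho|/q$ follows, with equality iff $q(\lambda+\rho)\in W\rho$. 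So your idea is salvageable, but the justification you gave is not the right one, the level constraint is spurious, and the coprincipal case needs a genuinely separate treatment (it is not simply ``replace $\rho$ by $\check\rho$'').

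The equality step has a real error. For $\lambda$ regular and integrally dominant, $J_\lambda$ is the primitive ideal of \emph{minimal} associated variety in its block, not maximal: already for integral dominant $\lambda$ one has $\on{Var}(J_\lambda)=\{0\}$ while the antidominant end of the block gives $\mc N$. Even after correcting ``maximal'' to ``minimal'', your conclusion does not follow: $\lambda$ and $\lambda_q$ have the same central character but lie in \emph{different} blocks of category $\mc O$ in general (their integral root systems are $W$-conjugate, not equal), so uniqueness within a block says nothing about $J_\lambda=J_{\lambda_q}$. One needs an additional argument---for instance, that a regular integrally dominant weight determines its annihilator via the central character and the $W$-conjugacy class of its integral root system, or a direct comparison using the inclusion $\on{Var}(J_\lambda)\subset\overline\O_q$ together with Joseph's dimension formula $\dim\on{Var}(J_\lambda)=\dim\mc N-|\Delta(\lambda)|$ and irreducibility of $\overline\O_q$ to first pin down the variety, and then a separate step to identify the ideal itself.
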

%, {\color{red}see \cite[Proposition 3.12]{AvEM}}. 
%for the details. 
%\commentA{But maybe one should explain the statement of \cite[Proposition 5.10]{BarVog85}?}
We remark that Theorem \ref{MainThm} and Proposition \ref{prop:min_conformal_Lk} together are somewhat similar in spirit to \cite[Proposition 5.10]{BarVog85}. 

One sees from the statement of Proposition \ref{prop:min_conformal_Lk} that the weight $\lam_q+k\Lam_0$ 
is not the unique %admissible weight whose integral root system is isomorphic to that of $k\Lam_0$ 
one that gives the minimal conformal dimension unless $k \in \Z_{\ge 0}$. The statement is in some ways more pleasant, however, after Drinfeld-Sokolov reduction. 
Let $f\in \g$  be a nilpotent element, and let $H_{DS,f}^0(?)$ be the Drinfeld-Sokolov reduction 
functor associated with $f$. 
The following theorem is proven in \cite{AvEM}, too. 

\begin{Th}[{\cite[Theorem 4.11]{AvEM}}]
\label{Th:min-conf-dim}
Let $k$ be admissible,
and assume that $f$ admits an even good grading defined by a semisimple element $x^0$.
\begin{enumerate}
\item Let $f\in \overline{\mathbb{O}}_k$, so that $H_{DS,f}^0(\L_k(\g))\cong \W_k(\g,f)$ (see \cite{AEkeren19}). 
%Assume that $H_{DS,f}^0(L_k(\g))\cong \W_k(\g,f)$ (cf. Conjecture \ref{Conj:isom}).
Then among the simple positive energy $\W_k(\g,f)$-modules, the minimal conformal dimension is given by $h_{\lam_q}$ where in general
$$h_\lam =\frac{|\lam+\rho|^2-|\rho|^2}{2(k+\check{h})}-\frac{k+\check{h}}{2}|x^0|^2+(x^0,\rho)$$ 
is the conformal dimension of the $\W_k(\g,f)$-module $H_{DS,f}^0(\L_k(\lam))$.

%(see \eqref{eq:def-lam0} and \eqref{eq:conformal-dim-of-W-module}). 
%and we have
%\begin{align*}
%\G_{\W_k(\g,f)}=c_{\W_k(\g,f)}-24 h_{\lam_o}.
%\end{align*}
 \item Suppose further that  $f\in \mathbb{O}_k$, so that $\W_k(\g,f)$ is lisse. Then the minimal conformal dimension~$h_{\lam_q}$ is realised by a unique 
 simple $\W_k(\g,f)$-module. 
 %\commentA{Should we add that it is also rational?}
%Suppose that $f$ admits an even good grading.
\end{enumerate}
\end{Th}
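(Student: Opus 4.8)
The plan is to pass, via Drinfeld--Sokolov reduction, from ordinary $\L_k(\g)$-modules to positive energy $\W_k(\g,f)$-modules, and then to optimise the conformal dimension over the resulting parameter using Proposition~\ref{prop:min_conformal_Lk} and Theorem~\ref{MainThm}. The basic input is the description of the simple positive energy $\W_k(\g,f)$-modules: since $f$ admits an even good grading, $H_{DS,f}^{0}$ is exact on $\mc{O}_k$, and (by Arakawa's results on $W$-algebras, extended to the present generality by Arakawa--van Ekeren) every simple positive energy $\W_k(\g,f)$-module is isomorphic to $H_{DS,f}^{0}(\L_k(\lam))$ for some $\lam$ with $\lam+k\Lam_0$ admissible and integral root system isomorphic to that of $k\Lam_0$ --- equivalently, by \cite{A12-2}, such that $\L_k(\lam)$ is an $\L_k(\g)$-module --- and for such $\lam$ the reduction $H_{DS,f}^{0}(\L_k(\lam))$ is either zero or simple. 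Two further facts I would quote: the conformal dimension of $H_{DS,f}^{0}(\L_k(\lam))$ (when nonzero) is the stated $h_\lam$, obtained by a direct computation from the conformal vector of $\W_k(\g,f)$ and the shift of $L_0$ by $x^{0}$; and $H_{DS,f}^{0}(\L_k(\lam))\neq 0$ if and only if $\overline{Gf}$ is contained in the associated variety of $\L_k(\lam)$ as an $\L_k(\g)$-module (the vanishing theorem for $"+"$-reduction on $\mc{O}_k$).

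For part (1): in the formula for $h_\lam$ every term other than $|\lam+\rho|^{2}/2(k+\check h)$ is independent of $\lam$, and $k+\check h=p/q>0$, so $\lam\mapsto h_\lam$ is a strictly increasing function of $|\lam+\rho|^{2}$. By Proposition~\ref{prop:min_conformal_Lk}, $|\lam+\rho|\geq|\lam_q+\rho|$ for every admissible $\lam$ of the relevant type, hence $h_\lam\geq h_{\lam_q}$ for every simple positive energy $\W_k(\g,f)$-module. It remains to see that $h_{\lam_q}$ is attained, i.e. that $H_{DS,f}^{0}(\L_k(\lam_q))\neq 0$. By Theorem~\ref{MainThm}, the associated variety of $\L_k(\lam_q)$ contains $\on{Var}(J_{\lam_q})=\overline{\mathbb{O}}_q=\overline{\mathbb{O}}_k$, which contains $f$ by hypothesis; the vanishing criterion then gives $H_{DS,f}^{0}(\L_k(\lam_q))\neq 0$, and (being nonzero) it is simple of conformal dimension $h_{\lam_q}$. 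This proves (1).

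For part (2), first note that when $f\in\mathbb{O}_k$ the algebra $\W_k(\g,f)$ is lisse: by the identification of the associated variety of the $W$-algebra with the Slodowy slice of $X_{\L_k(\g)}=\overline{\mathbb{O}}_k$, one has $X_{\W_k(\g,f)}=\Slo_f\cap\overline{\mathbb{O}}_k=\{f\}$ since $f$ lies in the open orbit; in particular $\W_k(\g,f)$ has only finitely many simple positive energy modules. Now let $N$ be such a module with conformal dimension $h_{\lam_q}$ and write $N\cong H_{DS,f}^{0}(\L_k(\lam))$. Then $h_\lam=h_{\lam_q}$, so $|\lam+\rho|=|\lam_q+\rho|$, and Proposition~\ref{prop:min_conformal_Lk} forces $J_\lam=J_{\lam_q}$ (and $\on{Var}(J_\lam)=\overline{\mathbb{O}}_q$). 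The remaining step --- which I expect to be the main obstacle --- is to deduce from $J_\lam=J_{\lam_q}$ that $H_{DS,f}^{0}(\L_k(\lam))\cong H_{DS,f}^{0}(\L_k(\lam_q))$, i.e. that for $f$ in the open orbit of $\on{Var}(J_\lam)$ the reduction of a simple ordinary module depends only on its annihilator. The route I would take uses that $\Zhu(\W_k(\g,f))$ is the finite $W$-algebra $U(\g,f)$, so that $N$ is a highest weight module over $\W_k(\g,f)$ whose highest weight is the class of a simple $U(\g,f)$-module obtained by finite Drinfeld--Sokolov reduction from $L(\lam)$; for $f$ generic in $\on{Var}(J_\lam)=\overline{\mathbb{O}}_q$ one then argues, using that the orbit $\mathbb{O}_q=\mathbb{O}_k$ occurs with multiplicity one in $X_{\L_k(\g)}$ (Arakawa), that this finite reduction is pinned down by the central character, hence by $J_\lam$. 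The delicate point is to control the possible ambiguity measured by the component group of the centraliser of $f$ when $\mathbb{O}_k$ is not of principal Levi type; one must verify that the admissibility constraint on $\lam$ (same integral root system as $k\Lam_0$) removes it. Granting this, every minimal-conformal-dimension simple module is isomorphic to $H_{DS,f}^{0}(\L_k(\lam_q))$, which is nonzero by part (1); this gives the asserted uniqueness and realisability.
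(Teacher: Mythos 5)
A preliminary remark: the paper you are being compared against does not prove this statement at all --- Theorem \ref{Th:min-conf-dim} is quoted from \cite[Theorem 4.11]{AvEM}, the present article only supplying one of its ingredients (Theorem \ref{MainThm}) --- so your proposal has to be judged on its own merits. Your part (1) is essentially the right argument: $h_\lam$ is strictly increasing in $|\lam+\rho|^2$ because $k+\check{h}>0$, Proposition \ref{prop:min_conformal_Lk} gives $h_\lam\geq h_{\lam_q}$, and the minimum is attained because $X_{\L_k(\lam_q)}=\on{Var}(J_{\lam_q})=\overline{\O}_q\ni f$ by Theorem \ref{MainThm}, so the reduction of $\L_k(\lam_q)$ is nonzero by the non-vanishing criterion. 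The one input you should treat with more care is the exhaustion claim: for $f$ only in $\overline{\O}_k$ (so $\W_k(\g,f)$ need not be lisse and simple positive energy modules may have infinite-dimensional top) the assertion that \emph{every} simple positive energy module is of the form $H^0_{DS,f}(\L_k(\lam))$ with $\lam\in\Adm^k_{\circ}$ is stronger than what \cite{AEkeren19} establishes (their classification concerns $f\in\O_k$). The safer route to the lower bound is through the Zhu algebra of $\W_k(\g,f)$, a quotient of the finite $W$-algebra $U(\g,f)$: the central characters of its simple modules are constrained exactly so that Proposition \ref{prop:min_conformal_Lk} applies, without needing to realise every simple positive energy module as a reduction.

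The genuine gap is in part (2), and you have identified it yourself without closing it. From $h_\lam=h_{\lam_q}$ one only gets $J_\lam=J_{\lam_q}$ via Proposition \ref{prop:min_conformal_Lk}; the whole content of uniqueness is that this forces $H^0_{DS,f}(\L_k(\lam))\cong H^0_{DS,f}(\L_k(\lam_q))$, i.e.\ that for $f$ in the open orbit the reduction of a simple ordinary $\L_k(\g)$-module is determined by its annihilator. Your sketch of this step (``the finite reduction is pinned down by the central character, hence by $J_\lam$'') is, as stated, not true in general: a central character does not determine a primitive ideal, and even fixing the primitive ideal, Losev's theory says the fibre of the map from finite-dimensional simple $U(\g,f)$-modules to primitive ideals with associated variety $\overline{\O}_k$ is a full orbit of the component group of the centraliser of $f$, which is typically nontrivial for the orbits $\O_q$ (these are in general not even special, as the introduction stresses). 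So one must actually prove either that the admissible weights $\lam\in\Adm^k_{\circ}$ with $J_\lam=J_{\lam_q}$ form a single class under the isomorphism criterion for reductions, or an equivalent multiplicity-one statement for the Zhu algebra over the point $\{f\}=\Slo_f\cap\overline{\O}_k$; your ``granting this'' concedes precisely the assertion of part (2). As it stands the proposal establishes (1) (modulo the exhaustion caveat above) but not (2).
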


It seems to be widely believed that 
 a rational, lisse, simple, self-dual conformal vertex algebra $V$ 
 admits a a unique  simple module of minimal conformal dimension. 
Theorem \ref{Th:min-conf-dim} confirms 
this assertion   
for exceptional $W$-algebras (\cite{KacWak03,Ara09b})
(in a wider sense, see \cite{AEkeren19}) 
that are lisse \cite{Ara09b} and rational 
(\cite{Ara13, Ara09b, CreLin18,AEkeren19, Fas22,CreLin})\footnote{The rationality of
exceptional $W$-algebras has been recently proved in full generality  by McRae \cite{McRae21}.}.

In another direction, let us mention that
one can show that 
\begin{align}
\label{eq:small_quantum}
\overline{\mathbb{O}}_q\cong \on{Spec}H^\bullet(\mf{u}_{\zeta}(\g),\C)
\end{align}
based on 
the works \cite{GinKum93,BenNakPar14},
where $\mf{u}_{\zeta}(\g)$ is the small quantum group associated with $\g$ at 
the $q$-th root $\zeta$ of unity 
provided that $q$ is odd and not a bad prime for $\g$. 
Indeed, for $q > h$, with $h$ the Coxeter number of $\g$,
$\on{Spec}H^\bullet(\mf{u}_{\zeta}(\g),\C)$ is known to be the nilpotent cone 
$\mc{N}$ of $\g$ by \cite{GinKum93} 
and $\mc{N}=\overline{\mathbb{O}}_q$ by \cite{Ara09b}. For $q \le h$, 
the authors of \cite{BenNakPar14} proved that 
$\on{Spec}H^\bullet(\mf{u}_{\zeta}(\g),\C)$ 
is the closure $G.\mf{u}_{I_q}$  of some Richardson nilpotent orbit 
described in term of a set 
of simple roots $I_q \subset \Pi$ which only depends on $q$. 
Here $\Pi$ is a set of simple roots of $\g$, and $\mathfrak{u}_I$ denotes the nilpotent radical of the standard parabolic  subalgebra of $\g$ corresponding to a subset $I \subset \Pi$. It is straightforward to check that $G.\mf{u}_{I_q}$ coincides with 
$\overline{\mathbb{O}_{q}}$ from the tables in \cite{Ara09b}. 
The equality \eqref{eq:small_quantum} suggests a strong connection between 
the representations of $\mathbb{L}_k(\g)$ and of $\mf{u}_{\zeta}(\g)$,
which will be studied in a forthcoming paper \cite{ACK}.

%\commentA{I am not sure this is the right place to put this now? Maybe at a remark at the end of the introduction?}
Finally, we remark that
the statement of Theorem \ref{MainThm} for $(q,\check{r})=1$,
that is,
the equality
%isomorphism
$\on{Var}(J_{\rho/q-\rho})
=\mc{N}_q$,
 seems to be true without the assumption that
$(q,\check{r})=1$.

\subsection*{Acknowledgements.} 
The authors thank Anna Lachowska for bringing
the paper \cite{BenNakPar14} to our attention.
TA is partially supported by JSPS KAKENHI Grant Numbers 17H01086, 17K18724,
21H04993.
JvE was supported by the Serrapilheira Institute
(grant number Serra -- 1912-31433) and by CNPq grants 409582/2016-0 and 303806/2017-6. 
AM is partially  supported by ANR Project GeoLie Grant number ANR-15-CE40-0012.

\section{Admissible affine vertex algebras} 
\label{sec:admissible}

In this section we prove the inclusions $\subset$ in Theorem \ref{MainThm}
using 
the representation theory of admissible affine vertex algebras.

Fix a triangular decomposition $\g=\n_-\+\h\+\n_+$,
$\Delta$ the set of roots of $\g$,
$\Delta_+$ a set of positive roots.
Let $Q$, $P$, $\check{Q}$, $\check{P}$
be the root lattice, the weight lattice, the coroot lattice and the coweight lattice, respectively. Let $\alpha^{\vee}=2\alpha/(\alpha|\alpha)$ denote the coroot corresponding to the root $\alpha$. The Weyl vector and covector are given, respectively, by $\rho = \tfrac{1}{2} \sum_{\alpha \in \Delta} \alpha$ and $\check{\rho} = \tfrac{1}{2} \sum_{\alpha \in \Delta} \alpha^\vee$. Let $h$ and $\check{h}$ be the 
Coxeter number 
and the dual Coxeter number of $\g$, respectively. We have
 \begin{align}
 h=( \check{\rho}|\theta )+1,\quad
\check{h}= ( \rho|\check{\theta})+1,
\label{eq:Coxternum}
\end{align}
where $\theta$ is the highest root of $\g$. We denote by $\check{\g}$ the Langlands dual Lie algebra of $\g$, characterized by its root system $\Delta(\check{\g}) = \{\alpha^\vee / \check{r} \colon \alpha \in \Delta(\g)\}$. 
%\commentA{I would prefer to say 
%$\Delta(\check{\g}) = \{\alpha^\vee \colon \alpha \in \Delta(\g)\}$; 
%for the verifications, it is what we use.}

Let $\widehat{\g} = \g[t,t^{-1}] \oplus \C K %\+ \C D
$  be the affine Kac-Moody algebra, 
with the commutation relations:  
$$[x t^m,y t^n] = [x,y] t^{m+n} + m \delta_{m+n,0} (x|y) K,  %\quad [D,xt^n]=-nxt^n,
\quad
[K,\affg]=0,$$
for all $x,y \in\g$ and all $m,n\in\Z$, where 
$(~|~)=\displaystyle{\frac{1}{2 \check{h}}}\times$Killing form 
is the normalized invariant inner product of $\g$ and 
%$\delta_{i,j}$ is the Kronecker 
%symbol, 
and $x t^n$ stands for $x \otimes t^n$, for $x \in \g$, $n \in \Z$. 
Let $\widehat{\g}=\widehat{\mf{n}}_-\+ \widehat{\h}\+\widehat{\mf{n}}_+$
be the 
standard triangular decomposition,
that is,
$\widehat{\h} = \h \oplus \C K$  the  Cartan subalgebra of $\widehat{\g}$,
$\widehat{\mf{n}}_+=\mf{n}_++t\g[t]$,
$\widehat{\mf{n}}_-=\mf{n}_-+t^{-1}\g[t^{-1}]$.
Let $\affh^*=\h^*\+ \C\Lam_0$ be the dual of $\widehat{\h}$,
where  $(\Lam_0|K)=1$,
$(\Lam_0|\h)=0$. The affine Weyl vector is $\hat{\rho}=\rho+\check{h}\Lam_0$.

 Let $\tilde{\h}=\affh\+ \C D$ be the extended Cartan subalgebra of $\affg$,
 $\tilde{\h}^*=\affh^*\+ \C \delta$  the dual of $\tilde{\h}$,
with  $(\delta|D)=1$, $(\delta|\h)=0$.
 Let $\Deltahatre\subset \tilde{\h}^*$ be the real root system,
$\Deltahatre_+$ the standard set of positive roots in  $\Deltahatre$.
We have
%\commentA{It was typo in the definition of $\Deltahatre_+$: not exhaustive and not disjoint sets!}
\begin{align*}
 \Deltahatre &= \{\alpha+n\delta  \mid n \in \Z, \alpha \in \Delta\}= \Deltahatre_+\sqcup  \left(-\Deltahatre_+\right),\\
 &\Deltahatre_+=%\{\alpha+n\delta\mid \alpha\in \Delta_+, \ n\ge 0\}
  \Delta_+ 
 \sqcup \{\alpha+n\delta\mid \alpha\in \Delta %_+
 , \ n> 0\}.
\end{align*}
Let $\widehat{W}=W\ltimes \check{Q}$ be the affine Weyl group of $\affg$ and $\widetilde{W}=W\ltimes \check{P}$ the extended affine Weyl group. Here the action of the element $\beta \in \check{Q}$ or $\check{P}$ is via the translation $t_\beta$ defined as
\begin{align*}
t_\beta(\lambda) = \lambda + (\lambda | \delta) \beta - \left[(\lambda | \beta) + \tfrac{1}{2}|\beta|^2 (\lambda | \delta) \right] \delta.
\end{align*}

For $\lam\in \widehat\h^*$, let 
$$\widehat{\Delta}(\lam)=\{\alpha\in \Deltahatre\mid \bra \lam+\hat{\rho},\alpha^{\vee}\ket
\in \Z
\},$$
the set of roots integral with respect to $\lam$. A weight $\lam \in \widehat\h^*$ 
is said to be \emph{admissible} if 
\begin{enumerate}
\item $\lam$ is regular dominant, that is,
$\bra \lam+\hat{\rho},\alpha^{\vee}\ket >0$ for all $\alpha\in \widehat{\Delta}_+(\lam) =\widehat{\Delta}(\lam)\cap 
\Deltahatre_+$,
\item $ \mathbb{Q}\Deltahatre = \mathbb{Q}\widehat{\Delta}(\lam)$.
\end{enumerate}
We shall denote by $\Adm^k$ the set of admissible weights of level $k$. The irreducible highest weight representation $\L(\lam)$ is called 
admissible if $\lam$ is admissible.

Given any $k\in\C$, 
%the {universal vacuum representation of level $k$ of $\affg$} 
%is the representation induced from $\C_k$:
let
\begin{align*} 
\V^k(\fing) =
 U(\affg)\otimes_{U(\mf{g}[t]\oplus \C K)}\C_k, 
\end{align*}
where $\C_k$ is  the one-dimensional representation 
 of 
$\g[t] \oplus \C K$ on which $\g[t]$ 
acts by 0 and $K$ acts as a multiplication by the scalar $k$.
There is a unique vertex algebra structure 
on $\V^k(\g)$ such that $\vac$ is the image of $1\otimes 1$ 
in $\V^k(\g)$ and 
$$x(z) :=(x_{(-1)}\vac)(z)= \sum_{n\in\Z} (xt^n) z^{-n-1}$$
for all $x\in \g$, where we regard $\g$ as a subspace of $\V^k(\g)$ 
through the embedding $x \in \g \hookrightarrow x_{(-1)}\vac \in \V^k(\g)$. 
The vertex algebra $\V^k(\g)$ is called the {\em universal affine vertex algebra} associated with $\g$ at level $k$.

Any graded quotient  of $\V^k(\g)$ inherits a vertex algebra structure from 
$\V^k(\g)$.
In particular,
the unique simple graded quotient $\L_k(\g)$ of $\V^k(\g)$
is a vertex algebra, isomorphic to $\L(k\Lambda_0)$ as a $\widehat{\g}$-module.

For a graded quotient $V$ of $\V^k(\g)$,
the Zhu $C_2$-algebra \cite{Zhu96} can be defined as
the quotient
$$R_V=V/t^{-2}\g[t^{-1}] V.$$
There is a surjective linear map
$S(\g)\ra R_V$ that sends the monomial $x_1\dots x_r$, $x_i\in \g$,
to  $(x_1t^{-1})\dots (x_rt^{-1})|0\ket+t^{-2}\g[t^{-1}] V $,
and the kernel of this map is a Poisson ideal
of $S(\g)=\C[\g^*]$. 
Hence, $R_V$ is a Poisson algebra.
The associated variety of $V$ is by definition
the Poisson variety
\begin{align*}
X_V=\on{Specm}R_V,
\end{align*}
which is a $G$-invariant and 
conic subvariety of $\g^*$.

%If $k$ is generic, then $L_k(\g)=V^k(\g)$, and hence,
% $X_{L_k(\g)}=X_{V^k(\g)}=\g^*$. 
%%In particular, $X_{L_k(\g)}$ needs not be contained in the nilpotent cone $\mc{N}$ of $\g^*$.
%On the
%other hand,
%it is known that
%$X_{L_k(\g)}=\{0\}$ if and only if
% $L_k(\g)$ is integrable as a $\affg$-module,
%or equivalently,   $k\in \Z_{\geq 0}$.

The simple vertex algebra $\L_k(\g)$ is called the {admissible affine vertex algebra}
if $\L_k(\g)$ is {admissible} as representation over the  affine Kac-Moody algebra
$\affg$ associated with $\g$,
or equivalently,
$k\Lam_0$ is the admissible weight.
If this is the case,
the level $k$ is called an {admissible number} for $\affg$.
By \cite[Proposition 1.2]{KacWak08}, $k$ is an admissible number if and only if 
\begin{align}
k+\check{h}=\frac{p}{q},\quad p,q\in \Z_{\geq 1},\ 
(p,q)=1,\quad  p\geq \begin{cases}
\check{h}&\text{if }(q,\check{r})=1,\\
h&\text{if }(q,\check{r})\ne 1.
\end{cases}
\label{eq:ad-number}
\end{align}
If this is the case we have
\begin{align*}
\widehat{\Delta}(k\Lam_0)=
\begin{cases}
\{\alpha+nq\delta\mid \alpha\in \Delta,\ n\in \Z\}
&\text{if }(q,\check{r})=1,\\
\{\alpha+nq\delta\mid \alpha\in \DeltaLong,\ n\in \Z\}\\
\quad \sqcup
\{\alpha+\frac{nq}{\check{r}}\delta\mid \alpha\in \DeltaShort,\ n\in \Z\}&\text{if }(q,\check{r})\ne 1,
\end{cases}
\end{align*}
where 
$\DeltaLong$ and $\DeltaShort$ are the sets of long roots and short roots, respectively.

We now recall some results on admissible affine vertex algebras, their associated varieties and modules. To state the results we recall the following subset of admissible weights
\begin{align*}
\Adm^k_{\circ}=\{\lam\in \Adm^k\mid \widehat{\Delta}(\lam)=w(\widehat{\Delta}(k\Lam_0))
\text{ for some }w\in \widetilde{W} 
\}.
\end{align*}

\begin{Th}[\cite{Ara09b}]
Let $k$ be an admissible number with  denominator $q$  as in \eqref{eq:ad-number}.
Then
\begin{align*}
X_{\L_k(\g)}=\overline{\O}_q.
\end{align*}
\end{Th}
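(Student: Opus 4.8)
\emph{Proof sketch.} The plan is to reduce the statement to a question about a single polynomial on $\g^*$ and to answer it by computing the symbol of the singular vector generating the maximal submodule. Write $k+\check h=p/q$ with $p,q$ as in \eqref{eq:ad-number}. Since $\L_k(\g)=\V^k(\g)/N_k$ with $N_k$ the maximal proper $\affg$-submodule, since the canonical surjection $S(\g)\to R_{\V^k(\g)}$ is an isomorphism (so $R_{\V^k(\g)}\cong\C[\g^*]$), and since the kernel of $S(\g)\twoheadrightarrow R_{\L_k(\g)}$ is the image $\bar N_k\subseteq S(\g)$ of $N_k$, everything reduces to computing $\bar N_k$. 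Because $\C_k$ is $\g[t]$-trivial, $\V^k(\g)$ is integrable as a $\g$-module, so $N_k$ is generated by a single singular vector $v$, attached to the unique simple root $\alpha_0^{(q)}$ of $\widehat{\Delta}(k\Lam_0)$ that does not lie in $\Delta_+$. Writing $N_k=U(\affn_-)v$ with $\affn_-=\n_-\oplus t^{-1}\g[t^{-1}]$ and unwinding the definition of $R_{\V^k(\g)}$, one finds that a PBW monomial $z_1\cdots z_m\cdot v$ ($z_i\in\affn_-$) has zero image in $R_{\V^k(\g)}$ unless each $z_i$ has $t$-degree $0$ or $-1$, in which case it maps to $y_1\cdots y_a\cdot\ad(f_1)\cdots\ad(f_b)\,\sigma(v)\in S(\g)$, where the $y_i$ (resp.\ $f_j$) are the elements of $\g$ attached to the $t$-degree $-1$ (resp.\ $0$) factors and $\sigma(v)\in S(\g)$ is the symbol of $v$. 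Hence $\bar N_k$ is the ideal of $\C[\g^*]$ generated by the $\g$-submodule $U(\g)\sigma(v)$, so
\begin{align*}
X_{\L_k(\g)}=\bigl\{x\in\g^*\colon \sigma(v)|_{\overline{G\cdot x}}=0\bigr\}=\bigcup\bigl\{\,\O\colon \O\text{ a }G\text{-orbit},\ \sigma(v)|_{\overline{\O}}=0\,\bigr\}.
\end{align*}
As $\mc N_q$ and ${}^L\mc N_{q/\check r}$ are irreducible (\cite{Ara09b}), it now suffices to prove (a) $\sigma(v)|_{\overline{\O}_q}=0$, and (b) $\sigma(v)$ does not vanish identically on $\overline{\O}$ for any $G$-orbit $\O\not\subseteq\overline{\O}_q$.

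To carry this out, first determine $v$. For $(q,\check r)=1$ one has $\alpha_0^{(q)}=-\theta+q\delta$, and the inequality $p\geq\check h$ in \eqref{eq:ad-number} is precisely what makes $s_{\alpha_0^{(q)}}\cdot(k\Lam_0)=k\Lam_0-(p-\check h+1)\alpha_0^{(q)}$ a Verma embedding, so $v$ has $\h^*$-weight $(p-\check h+1)\theta$ and conformal weight $(p-\check h+1)q$. For $(q,\check r)\neq1$ (so that $\check r\mid q$) one has $\alpha_0^{(q)}=-\thetashort+(q/\check r)\delta$, the inequality $p\geq h$ makes the analogous reflection a Verma embedding, and $v$ has $\h^*$-weight $(p-h+1)\thetashort$ and conformal weight $(p-h+1)q/\check r$. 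In either case $\sigma(v)$ is a homogeneous element of $S(\g)$ of degree equal to this conformal weight and of this $\h^*$-weight; being the image of an $\affn_+$-singular vector it is a $\g$-highest weight vector, so $U(\g)\sigma(v)$ is the irreducible $\g$-module of highest weight $(p-\check h+1)\theta$ (resp.\ $(p-h+1)\thetashort$) --- once one checks $\sigma(v)\neq0$. The precise form of $\sigma(v)$ follows from the Malikov--Feigin--Fuks expression of $v$ as an ordered product of operators indexed by affine real roots, by extracting the ``pure $t^{-1}$'' part, a computation that localizes to the $\sl_2$-subalgebra (resp.\ a rank-two subalgebra) generated by the relevant root vectors.

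It remains to verify (a) and (b). Part (a), that $\sigma(v)$ lies in the defining ideal of $\overline{\O}_q$, reduces via a Jacobson--Morozov $\sl_2$-triple through a point of $\O_q$ to an identity about the dimensions of the $\ad$-$\sl_2$ isotypic components of $\g$: for $(q,\check r)=1$ this is the description $\overline{\O}_q=\{x\colon(\ad x)^{2q}=0\}$, and for $(q,\check r)\neq1$ it is the corresponding statement in $\check\g$, under the Langlands duality that carries ${}^L\mc N_{q/\check r}$ to $\{x\in\check\g\colon(\ad x)^{2q/\check r}=0\}$ and $\pi_{\thetashort}$ to the adjoint representation. For part (b), the explicit form of $\sigma(v)$ shows it does not vanish identically on $\overline{\O}$ for any of the finitely many nilpotent orbits minimal with $\O\not\subseteq\overline{\O}_q$ (again via a Jacobson--Morozov triple), while the non-nilpotent orbits are excluded because $X_{\L_k(\g)}\subseteq\mc N$ --- equivalently because $\L_k(\g)$ is quasi-lisse. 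This yields $X_{\L_k(\g)}=\overline{\O}_q$; as an independent check, $\dim X_{\L_k(\g)}$ agrees with $\dim\overline{\O}_q$ by the orbit-dimension formulas of \cite{ElaKacVin08}.

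The main obstacle is the symbol computation: the Malikov--Feigin--Fuks expression for $v$ involves complex powers that telescope delicately, so isolating $\sigma(v)$ precisely enough to determine the vanishing locus in (a) and (b) is the real work --- most of all in the non-simply-laced case $(q,\check r)\neq1$, where the long and short roots of $\widehat{\Delta}(k\Lam_0)$ are integral to different depths and the Langlands-dual variety ${}^L\mc N_{q/\check r}$ takes the place of $\mc N_q$.
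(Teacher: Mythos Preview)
The present paper does not prove this theorem: it is quoted from \cite{Ara09b} and used as a known input to the rest of the argument. So there is no ``paper's own proof'' to compare against here. Your sketch is, in outline, the strategy of the original proof in \cite{Ara09b}: identify $R_{\L_k(\g)}$ with $S(\g)$ modulo the ideal generated by the $G$-orbit of the symbol $\sigma(v)$ of the unique singular vector, and then determine the zero locus of that ideal.

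That said, two points in your sketch are genuine gaps rather than details to be filled in. First, the exclusion of non-nilpotent orbits in your part~(b) is circular. You write that these are handled ``because $X_{\L_k(\g)}\subseteq\mc N$ --- equivalently because $\L_k(\g)$ is quasi-lisse'', but both of these statements are \emph{consequences} of the theorem you are proving, not independent inputs; for generic $k$ one has $X_{\L_k(\g)}=\g^*$, so there is no a priori reason the associated variety lies in $\mc N$. In \cite{Ara09b} this issue does not arise because the symbol computation identifies the zero locus of $G\cdot\sigma(v)$ directly with $\mc N_q=\{x:(\ad x)^{2q}=0\}$ (resp.\ ${}^L\mc N_{q/\check r}$), and these varieties are contained in $\mc N$ by definition. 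Your decomposition into ``nilpotent orbits, case by case'' plus ``non-nilpotent orbits, by quasi-lisseness'' is therefore not a valid substitute for that direct identification.

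Second, the symbol computation itself---which you rightly flag as the crux---is only gestured at. Extracting the ``pure $t^{-1}$ part'' of the Malikov--Feigin--Fuks expression is exactly where the work lies: one must show $\sigma(v)\neq 0$, identify it precisely (not just its weight and degree), and check that the ideal it generates has radical equal to the defining ideal of $\mc N_q$ (resp.\ ${}^L\mc N_{q/\check r}$) for \emph{every} admissible numerator $p$, even though $\sigma(v)$ itself depends on $p$. Your final sentence about an ``independent check'' via \cite{ElaKacVin08} is also misplaced: that reference gives formulas for $\dim\overline{\O}_q$ (and for root-counting quantities used elsewhere in the present paper), not for $\dim X_{\L_k(\g)}$, so it does not provide an independent dimension comparison until the theorem is already proved.
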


\begin{Th}[\cite{A12-2}]\label{Th:classification-of-simple-modules}
Let $k$ be an admissible number,
%Let $L_k(\g)$ be an admissible affine vertex algebra,
$\lam\in \affh^*$. 
Then $\L(\lam)$ is a $\L_k(\g)$-module if and only if 
$\lam\in \Adm^k_{\circ}$.
\end{Th}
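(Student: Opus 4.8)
\emph{The plan.} I would derive the statement from the structure of the maximal proper submodule of $\V^k(\g)$ together with Zhu's theorem. Recall that $\V^k(\g)$ is a generalized Verma module over $\affg$, and that for $k$ admissible its maximal proper submodule $N_k$ is generated by a single singular vector $v_k$ (this is part of the structure theory of admissible representations, cf.\ \cite{KacWak89}). Since $\V^k(\g)$ is generated as a vertex algebra by $\g=\g_{(-1)}\vac$, every $\affg$-submodule of $\V^k(\g)$ is a vertex-algebra ideal; in particular $N_k$ is the ideal generated by $v_k$, so a smooth level-$k$ $\affg$-module is a module over $\L_k(\g)=\V^k(\g)/N_k$ if and only if all Fourier modes of the field $Y(v_k,z)$ act on it by $0$. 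Zhu's correspondence sends $\L(\lam)$ — which, since $k\ne-\check h$, is a positive-energy $\V^k(\g)$-module with top space the simple highest-weight $\g$-module $L_\g(\bar\lam)$, $\bar\lam=\lam|_{\h}$ — to $L_\g(\bar\lam)$, and is compatible with quotients; since $\Zhu(\V^k(\g))=U(\g)$ and $\Zhu(\L_k(\g))=U(\g)/I_k$, where $I_k=\Zhu(N_k)$ is the two-sided ideal generated by the image $[v_k]$ of $v_k$, the vanishing condition on $v_k$ becomes the requirement that $L_\g(\bar\lam)$ be a $U(\g)/I_k$-module. Hence the theorem reduces to
$$I_k\subseteq J_{\bar\lam}\iff\lam\in\Adm^k_{\circ}.$$

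\emph{The core step: identifying $I_k$.} Next I would pin $I_k$ down well enough to test this inclusion. The singular vector $v_k$ has weight $y_0\cdot(k\Lam_0)$, where $y_0\in\widehat W$ is the reflection attached to the affine node of the rescaled integral root system $\widehat\Delta(k\Lam_0)$; it is built from $\theta$ when $(q,\check r)=1$ and from the highest short root $\thetashort$ when $(q,\check r)\ne1$, which is the source of the dichotomies appearing throughout (in $\Adm^k_{\circ}$, in $\overline{\O}_q$, in $\lam_q$). Using the Harish-Chandra/Shapovalov description of the action of $[v_k]\in U(\g)$ — or, more robustly, by computing $\on{Var}(I_k)$ from the surjection $R_{\L_k(\g)}\twoheadrightarrow\gr\Zhu(\L_k(\g))$, the equality $X_{\L_k(\g)}=\overline{\O}_q$ of \cite{Ara09b}, and the theory of primitive ideals in $U(\g)$ — one shows that $I_k\subseteq J_{\bar\lam}$ holds exactly when $\lam$ is admissible and the reflections generating $\widehat W(k\Lam_0):=\langle s_\gamma\mid\gamma\in\widehat\Delta(k\Lam_0)\rangle$ all belong to $\langle s_\gamma\mid\gamma\in\widehat\Delta(\lam)\rangle$, i.e.\ $\widehat\Delta(\lam)\supseteq w(\widehat\Delta(k\Lam_0))$ for some $w\in W$. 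Since admissibility forces $\Q\widehat\Delta(\lam)=\Q\Deltahatre$, this inclusion in fact forces $\widehat\Delta(\lam)=w'(\widehat\Delta(k\Lam_0))$ for some $w'\in\widetilde W$, which is exactly $\lam\in\Adm^k_{\circ}$; this is the ``only if'' implication. For ``if'', I would invoke the Kac--Wakimoto theory of principal admissible weights: each $\lam\in\Adm^k_{\circ}$ arises by twisting, by an element of $\widetilde W$, a dominant integral weight for the affine Kac--Moody algebra with root system $\widehat\Delta(k\Lam_0)$, and the Kac--Wakimoto character formula then identifies the maximal submodule of the Verma module $M(\lam)$ over $\affg$ and exhibits inside it the singular vector indexed by $y_0$, which is precisely the vanishing $[v_k]\cdot L_\g(\bar\lam)=0$.

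\emph{The main obstacle.} The delicate point is this middle step: passing rigorously and uniformly, over all simple $\g$, from ``$[v_k]$ annihilates $L_\g(\bar\lam)$'' to ``$\widehat\Delta(\lam)$ is $\widetilde W$-conjugate to $\widehat\Delta(k\Lam_0)$''. The case $(q,\check r)\ne1$ is what makes this genuinely hard: there $\widehat\Delta(k\Lam_0)$ scales long and short roots by different amounts, $v_k$ sits over $\thetashort$ rather than $\theta$, and both the non-integral Kac--Kazhdan determinant analysis and the principal-admissible-weight bookkeeping must be carried out in that twisted setting. A cleaner packaging is to prove outright that $I_k=\bigcap_{\lam\in\Adm^k_{\circ}}J_{\bar\lam}$, after which the classification falls out of the theory of two-sided ideals and their associated varieties in $U(\g)$; but this merely relocates the difficulty into establishing that identity, which still rests on the explicit shape of $v_k$ and on $X_{\L_k(\g)}=\overline{\O}_q$.
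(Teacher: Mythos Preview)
The paper does not prove this theorem; it is quoted from \cite{A12-2} and used as a black box in the proof of Proposition~\ref{Pro:included}. There is therefore no proof in the present paper against which to compare your proposal.

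Regarding your outline itself: the reduction via Zhu's algebra to the condition $I_k\subseteq J_{\bar\lam}$ is correct and is indeed how the argument in \cite{A12-2} begins. But the entire content of the theorem lies in the ``core step'' you flag, and there your proposal is a list of ingredients rather than an argument. Two specific points. First, you assert that $I_k\subseteq J_{\bar\lam}$ holds exactly when $\lam$ is admissible and $\widehat\Delta(\lam)\supseteq w(\widehat\Delta(k\Lam_0))$ for some $w\in W$; this is the heart of the matter and you give no mechanism for it beyond naming ``Harish-Chandra/Shapovalov'' and ``primitive ideals''. In particular it is not explained how a condition on the finite weight $\bar\lam$ forces the \emph{affine} regular-dominance that is part of admissibility. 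Second, your claim that admissibility then upgrades the inclusion $\widehat\Delta(\lam)\supseteq w(\widehat\Delta(k\Lam_0))$ to an equality up to $\widetilde W$ is unjustified: the condition $\Q\widehat\Delta(\lam)=\Q\Deltahatre$ in the definition of admissibility constrains the rational span, not the lattice, and does not by itself preclude strict containments between integral root subsystems at the same level; there are admissible weights of level $k$ outside $\Adm^k_\circ$, and you have not explained why your inclusion rules them out. The actual proof in \cite{A12-2} does not proceed by such abstract root-system manoeuvres: it pins down the image of $v_k$ in $U(\g)$ (or rather its projection to $S(\h)$) explicitly enough to read off the vanishing locus as a product condition over a specific set of affine coroots, and then matches that locus against the parametrisation of $\Adm^k_\circ$. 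That explicit computation --- and, as you correctly anticipate, its twisted variant when $(q,\check r)\ne 1$ --- is what is missing here.
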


\begin{Pro}\label{Pro:lamq-is-adm}
For a non-negative integer $q$ 
define $\hat\lam_q\in \affh^*$ by
\begin{align*}
\hat\lam_q+\hat{\rho}
=\begin{cases}\frac{1}{q}\left(\rho+p\Lam_0\right)&\text{ if }(q,\check{r})=1,\\ 
\frac{1}{q}\left(\check{\rho}+p\Lam_0\right)&\text{ if }(q,\check{r})\ne 1.
\end{cases}
\end{align*}
If $k$ is an admissible number with denominator $q$, as in \eqref{eq:ad-number}, then $\hat\lam_q\in \Adm^k_{\circ}$.
\end{Pro}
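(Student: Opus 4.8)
The plan is to verify the two conditions defining admissibility for the weight $\hat\lam_q$, and then to exhibit the element $w \in \widetilde W$ witnessing membership in $\Adm^k_\circ$. Write $\kappa := \rho$ in the case $(q,\check r)=1$ and $\kappa := \check\rho$ in the case $(q,\check r)\neq 1$, so that in both cases $\hat\lam_q + \hat\rho = \tfrac{1}{q}(\kappa + p\Lam_0)$. First I would compute the integral root system $\widehat\Delta(\hat\lam_q)$ directly: for $\alpha + n\delta \in \Deltahatre$ one has $\langle \hat\lam_q+\hat\rho, (\alpha+n\delta)^\vee\rangle = \tfrac{1}{q}\bigl(\langle \kappa, \alpha^\vee\rangle + n p \langle \delta, \alpha^\vee\rangle \cdot(\text{normalization})\bigr)$, and using $(p,q)=1$ together with the fact that $\langle\rho,\alpha^\vee\rangle \in \Z$ for all $\alpha$ (resp. $\langle\check\rho,\alpha^\vee\rangle$ is an integer exactly for $\alpha$ long, and lies in $\tfrac{1}{\check r}\Z$ for $\alpha$ short), one reads off that the pairing is in $\Z$ precisely under the congruence conditions on $n$ that produce exactly the sets $\widehat\Delta(k\Lam_0)$ displayed in the excerpt — except possibly translated. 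This is the computational heart of the argument but is genuinely routine given the explicit description of $\widehat\Delta(k\Lam_0)$ already recorded.

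Next I would address condition (ii), $\Q\Deltahatre = \Q\widehat\Delta(\hat\lam_q)$: this is immediate once the previous step shows $\widehat\Delta(\hat\lam_q)$ contains, for each $\alpha \in \Delta$, some affine root with finite part $\alpha$, since those already span $\tilde\h^*$ over $\Q$ after adjoining $\delta$, and $\delta$ itself is a $\Q$-combination of $\alpha + n\delta$ and $\alpha + m\delta$ for distinct $n,m$ appearing in the system. For condition (i), regular dominance, the cleanest route is \emph{not} to check it for $\hat\lam_q$ itself but to produce $w \in \widetilde W$ with $\widehat\Delta(\hat\lam_q) = w(\widehat\Delta(k\Lam_0))$ and $\hat\lam_q + \hat\rho = w(k\Lam_0 + \hat\rho)$ up to the dot action, because $k\Lam_0$ is already known to be admissible (hence regular dominant) and $\widetilde W$ acts by isometries permuting $\Deltahatre$, so regularity and dominance transport along $w$ — more precisely, if $w(k\Lam_0+\hat\rho)$ is dominant for $w(\Deltahatre_+ \cap \widehat\Delta(k\Lam_0))$ then it suffices to choose $w$ so that this image is $\Deltahatre_+ \cap \widehat\Delta(\hat\lam_q)$. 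Since $\hat\lam_q+\hat\rho$ and $k\Lam_0+\hat\rho = \rho + p\Lam_0$ differ by the scaling $\tfrac1q$ applied to $\kappa$ versus $q\rho$ (note $k\Lam_0 + \hat\rho$ has finite part $\rho$, while $\hat\lam_q+\hat\rho$ has finite part $\kappa/q$), one natural candidate is a translation $t_\beta$ by a suitable element $\beta \in \check P$: translations lie in $\widetilde W$, and a short computation with the translation formula $t_\beta(\lambda) = \lambda + (\lambda|\delta)\beta - [\cdots]\delta$ shows that $t_\beta$ rescales the "$\delta$-slope" of the affine roots in exactly the way needed to pass between the two systems.

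The main obstacle I anticipate is the bookkeeping in the case $(q,\check r)\neq 1$, where the integral root system has \emph{two} different $\delta$-spacings ($q$ for long roots, $q/\check r$ for short roots), so one must check that the single element $w \in \widetilde W$ (or the pair $w$ and dot-action shift) simultaneously matches both strata and respects the splitting $\Delta = \DeltaLong \sqcup \DeltaShort$; here the identity $\langle\check\rho,\alpha^\vee\rangle$ having denominator dividing $\check r$ exactly on short roots is what makes the numerics align, and one should double-check the interaction with the normalization $(~|~) = \tfrac{1}{2\check h}\times\text{Killing}$ so that the affine coroots $(\alpha+n\delta)^\vee$ are scaled correctly. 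A secondary point to be careful about is the degenerate case $q=0$ allowed by the statement ("non-negative integer $q$"), which should either be excluded as vacuous or handled by noting the hypothesis "$k$ admissible with denominator $q$" forces $q \geq 1$, so the $q=0$ clause in the definition of $\hat\lam_q$ is never invoked under the hypothesis of the proposition. Once $w$ is produced explicitly, both the $\Adm^k_\circ$ membership and the admissibility of $\hat\lam_q$ follow, and I would also remark that this same $w$ will be used downstream to identify $J_{\lam_q}$ via the translation principle.
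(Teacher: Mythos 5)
Your handling of the integral root system is in the same spirit as the paper's proof: one exhibits a translation $t_{-\mu}\in\widetilde{W}$ with $\widehat{\Delta}(\hat\lam_q)=t_{-\mu}(\widehat{\Delta}(k\Lam_0))$. But note that the choice of $\mu$ is the actual content you leave unspecified: the paper takes $\mu=c\check{r}\rho\in\check{P}$ with $c\check{r}p+dq=-1$ when $(q,\check{r})=1$, and $\mu=c\check{\rho}$ with $cp-dq=-1$ otherwise; the B\'ezout coefficient is exactly what makes the height term $(1+c\check{r}p)\bra\rho,\alpha^{\vee}\ket$ (resp.\ $(1+cp)\bra\check\rho,\alpha^\vee\ket$) vanish modulo $q\Z$, leaving the congruences on $n$ that define $\widehat{\Delta}(k\Lam_0)$, including the two different spacings for $\DeltaLong$ and $\DeltaShort$. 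Also, a translation does not ``rescale the $\delta$-slope'': $t_\beta(\alpha+n\delta)=\alpha+(n-(\beta|\alpha))\delta$ is a shift depending on $\alpha$.

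The genuine gap is in your treatment of condition (i). You propose to transport regular dominance from $k\Lam_0$ by finding $w\in\widetilde{W}$ satisfying \emph{both} $\widehat{\Delta}(\hat\lam_q)=w(\widehat{\Delta}(k\Lam_0))$ and $\hat\lam_q+\hat{\rho}=w(k\Lam_0+\hat{\rho})$. No such $w$ exists in general, because $\hat\lam_q$ is typically not in the $\widetilde{W}$-dot-orbit of $k\Lam_0$. Concretely, take $\g=\sl_2$ and $k=1/2$, so $p=5$, $q=2$: modulo $\C\delta$ the finite component of $wt_{m\rho}(k\Lam_0+\hat\rho)$ (here $\check{P}=\Z\rho$) is $\pm\tfrac{2+5m}{2}\rho$ with $m\in\Z$, which is never equal to $\tfrac12\rho$, the finite component of $\hat\lam_2+\hat\rho$. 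The translation $t_{-\mu}$ above conjugates the integral root systems but does \emph{not} carry $k\Lam_0+\hat\rho$ to $\hat\lam_q+\hat\rho$, so dominance cannot be transported along it, and your argument never establishes $\bra\hat\lam_q+\hat\rho,\alpha^{\vee}\ket>0$. This step must be checked directly, and it is easy: since $\hat\lam_q+\hat\rho=\tfrac1q(\rho+p\Lam_0)$ (resp.\ $\tfrac1q(\check\rho+p\Lam_0)$) and the admissibility bound gives $p\ge\check{h}$ (resp.\ $p\ge h$), the identities $h=(\check\rho|\theta)+1$, $\check{h}=(\rho|\check\theta)+1$ yield $\bra\hat\lam_q+\hat\rho,\alpha^{\vee}\ket>0$ for all $\alpha\in\Deltahatre_+$; this is where the lower bound on $p$ in the definition of admissible number enters, and it is the route the paper takes. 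Your remarks on condition (ii) and on the degenerate $q=0$ clause are fine.
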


\begin{proof}
By \eqref{eq:Coxternum},
we have
$\bra \hat\lam_q+\hat{\rho},\alpha^{\vee}\ket >0$
for $\alpha\in \Deltahatre_+$.
Hence $\hat\lam_q$ is regular dominant.
It is remain to show that
$\widehat{\Delta}(\hat\lam_q)=y(\widehat{\Delta}(k\Lam_0))$ for some $y\in \widetilde{W}$.
Firstly we suppose that $(q,\check{r})=1$,
so that
 $(\check{r} p,q)=1$. 
Take $c,d\in \Z$ such that 
$c\check{r}p+dq=-1$,
and set
$$\mu=c\check{r}\rho\in \check{P}.$$
Then we have 
$t_{-\mu}(\alpha+n\delta)=\alpha+\left[n + c\check{r}({\rho} | \alpha)\right]\delta$ and it follows that
\begin{align*}
\bra \hat\lam_q+\hat{\rho},t_{-\mu}(\alpha+n\delta)^{\vee}\ket=
\frac{1}{q}\left(\frac{2np}{(\alpha|\alpha)}+(1+c\check{r}p) (\rho | \alpha^{\vee})\right)\equiv \frac{2}{(\alpha|\alpha)}
\frac{np}{q}\pmod{\Z}.
\end{align*}
% Then 
% $t_{-\mu}(\alpha+n\delta)^{\vee}=\alpha^{\vee}+\left(c\check{r}\rho(\alpha^{\vee})+2n/(\alpha|\alpha)\right)\delta$.
% It follows that 
% \begin{align*}
% \bra \lam_q+\hat{\rho},t_{-\mu}(\alpha+n\delta)^{\vee}\ket=
% \frac{1}{q}\left(\frac{2np}{(\alpha|\alpha)}+(1+c\check{r}p)\rho(\alpha^{\vee})\right)\equiv \frac{2}{(\alpha|\alpha)}
% \frac{np}{q}\pmod{\Z}.
% \end{align*}
Thus
$t_{-\mu}(\alpha+n\delta)\in \widehat{\Delta}(\hat\lam_q)$ if and only if $n\equiv 0\pmod{q}$, and so we conclude that
\begin{align}
\widehat{\Delta}(\hat\lam_q)=t_{-\mu}(\widehat{\Delta}(k\Lam_0)).
\end{align}
Next we suppose that $(q,\check{r})>1$ so that $\check{r}|q$.
Take $c,d\in \Z$ such that $cp-dq=-1$,
and set
\begin{align*}
\mu=c\check{\rho}\in \check{P}.
\end{align*}
Then we have $t_{-\mu}(\alpha+n\delta)=\alpha+\left[n + c(\check{\rho} | \alpha)\right]\delta$ and it follows that
\begin{align*}
\bra \hat\lam_q+\hat{\rho},t_{-\mu}(\alpha+n\delta)^{\vee}\ket=
\frac{1}{q}\left(\frac{2np}{(\alpha|\alpha)}+(1+cp)(\check{\rho} | \alpha^{\vee})\right)\equiv \frac{2}{(\alpha|\alpha)}
\frac{np}{q}\pmod{\Z}.
\end{align*}
Thus,
$t_{-\mu}(\alpha+n\delta)\in \widehat{\Delta}(\hat\lam_q)$ if and only if $n\equiv \begin{cases}0\pmod{q}&\text{if }
\alpha\in \DeltaLong\\
0\pmod{q/\check{r}}&\text{if }\alpha\in \DeltaShort\end{cases}$.
We have thus shown that $\widehat{\Delta}(\hat\lam_q)=t_{-\mu}(\widehat{\Delta}(k\Lam_0))$.
\end{proof}

Let $\lam_q$ be the 
%We denote by $\bar \lam$ 
the restriction of a weight $\hat\lam_q \in \affh^*$ to $\h$. 
Then %In particular
\begin{align}
\lam_q=\begin{cases}
\rho/q-\rho&\text{if }(q,\check{r})=1,\\
\check{\rho}/q-\rho&\text{if }(q,\check{r})\ne 1
\end{cases}
\end{align}
as in introduction.

\begin{Pro}\label{Pro:included}
We have 
$\on{Var}(J_{\lam_q})\subset \overline{\O}_q$.
\end{Pro}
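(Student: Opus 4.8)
The plan is to deduce $\on{Var}(J_{\lam_q}) \subset \overline{\O}_q$ from the results on admissible affine vertex algebras recalled above, via a Drinfeld--Sokolov / Zhu-functor type comparison between the associated variety of the vertex algebra module $\L(\hat\lam_q)$ and the associated variety of the primitive ideal $J_{\lam_q}$. First I would invoke Proposition~\ref{Pro:lamq-is-adm} together with Theorem~\ref{Th:classification-of-simple-modules}: choosing any admissible number $k$ with denominator $q$ (such $k$ exist by \eqref{eq:ad-number}), the weight $\hat\lam_q$ lies in $\Adm^k_\circ$, so $\L(\hat\lam_q)$ is a module over the admissible affine vertex algebra $\L_k(\g)$. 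The key general principle is that for any module $M$ over a finitely strongly generated vertex algebra $V$, the associated variety $X_M$ (a subvariety of $X_V$) is defined, and $X_V = \overline{\O}_q$ for $\L_k(\g)$ by Arakawa's theorem quoted above; hence $X_{\L(\hat\lam_q)} \subset \overline{\O}_q$.

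Next I would relate $X_{\L(\hat\lam_q)}$, a subvariety of $\g^*$ coming from the vertex-algebra $C_2$-algebra, to $\on{Var}(J_{\lam_q})$, the associated variety of the primitive ideal in $U(\g)$. The mechanism is Zhu's algebra: Zhu's $C_2$-algebra $R_{\L_k(\g)}$ is a Poisson quotient of $\C[\g^*] = S(\g)$, while Zhu's (associative) algebra $\on{Zhu}(\L_k(\g))$ is a quotient of $U(\g)$, and the module $\L(\hat\lam_q)$ restricted to the top space is a $\on{Zhu}(\L_k(\g))$-module isomorphic to $L(\lam_q)$ as a $\g$-module, so its annihilator in $U(\g)$ contains the defining ideal of $\on{Zhu}(\L_k(\g))$; passing to associated graded and using the standard compatibility $\gr \on{Zhu}(V) \twoheadrightarrow R_V$ of Zhu's construction, one gets $\overline{\on{Var}(\on{Zhu}(\L_k(\g))\text{-ann})} \subset X_{\L_k(\g)}$, and since $J_{\lam_q}$ is the full annihilator of $L(\lam_q)$ it contains the image of the ideal of $\on{Zhu}(\L_k(\g))$, giving $\on{Var}(J_{\lam_q}) \subset X_{\L_k(\g)} = \overline{\O}_q$. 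A cleaner route, and the one I expect the paper actually uses, is to run this through Drinfeld--Sokolov reduction or the Losev/Arakawa results directly: the associated variety of $\on{Zhu}(V)$-modules is governed by $X_V$, and the functoriality gives $\on{Var}(\operatorname{Ann}_{U(\g)} L(\lam)) \subset X_V \cap \mathcal{N}$ whenever $L(\lam)$ is obtained from a $V$-module; applying this to $V = \L_k(\g)$ and $L(\lam_q)$ yields the claim immediately.

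The main obstacle is making the comparison $\on{Var}(J_{\lam_q}) \subset X_{\L_k(\g)}$ rigorous: one must check that $\L(\hat\lam_q)$ is genuinely a positive-energy (ordinary) $\L_k(\g)$-module whose conformal weight spaces are finite-dimensional, so that Zhu's functor applies and the top space is precisely $L(\lam_q)$ as a $\g$-module with the Sugawara grading; this is where admissibility of $\hat\lam_q$ and the hypothesis $\hat\lam_q \in \Adm^k_\circ$ (membership proven in Proposition~\ref{Pro:lamq-is-adm}) are essential. One also has to be careful that $\on{Var}(J_{\lam_q})$ as a subvariety of $\g^* \cong \g$ lands inside the nilpotent cone (automatic since $J_{\lam_q}$ is a primitive ideal of $U(\g)$ and $L(\lam_q)$ is a highest weight module), and that the identification of $X_{\L_k(\g)}$ with $\overline{\O}_q$ is exactly the one used to define $\O_q$. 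Modulo these standard but nontrivial compatibilities between Zhu's two algebras, the inclusion follows formally; the genuinely hard half of Theorem~\ref{MainThm}, namely the reverse inclusion via the dimension count using \cite{ElaKacVin08}, is deferred to the next section.
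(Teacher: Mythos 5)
Your proposal follows essentially the same route as the paper: Proposition \ref{Pro:lamq-is-adm} together with Theorem \ref{Th:classification-of-simple-modules} makes $\L(\hat\lam_q)$ an $\L_k(\g)$-module, hence its top space $L(\lam_q)$ a module over $\Zhu(\L_k(\g))\cong U(\g)/I_k$, so $I_k\subset J_{\lam_q}$ and $\on{Var}(J_{\lam_q})\subset \on{Var}(I_k)$. The only divergence is in how $\on{Var}(I_k)$ is compared with $X_{\L_k(\g)}=\overline{\O}_q$: the paper simply quotes \cite[Theorem 9.5]{A2012Dec}, which gives the equality $\on{Var}(I_k)=X_{\L_k(\g)}$, whereas you invoke the general compatibility between Zhu's $C_2$-algebra and Zhu's associative algebra. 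There you have the arrow backwards: the canonical surjection is $R_V\twoheadrightarrow \gr\Zhu(V)$, not $\gr\Zhu(V)\twoheadrightarrow R_V$; as written, your map would yield $X_{\L_k(\g)}\subset\on{Var}(I_k)$, which is useless here, while the correct direction gives exactly the inclusion $\on{Var}(I_k)\subset X_{\L_k(\g)}$ that your argument needs (and only the inclusion is needed for this proposition, so your route is in fact slightly more economical than citing the equality). Your worries about positive-energy and finite-dimensional graded pieces are unnecessary: $\L(\hat\lam_q)$ is an irreducible highest weight $\widehat{\g}$-module, so Zhu's functor applies and its top space is $L(\lam_q)$, which is all the paper uses.
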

\begin{proof}
Let $k$ be admissible number with denominator $q$,
and
let $\on{Zhu}(\L_k(\g))$ be the Zhu algebra of $\L_k(\g)$.
Then $\on{Zhu}(\L_k(\g))\cong U(\g)/I_k$ for some two-sided ideal $I_k$ of $U(\g)$.
By \cite[Theorem 9.5]{A2012Dec},
we have $$\on{Var}(I_k)=X_{\L_k(\g)}=\overline{\O}_q.$$
By Proposition \ref{Pro:lamq-is-adm} we have $\lam_q\in \Adm^k_{\circ}$, and so by Theorem \ref{Th:classification-of-simple-modules} in turn, 
$\L(\hat\lam_q)$ is an $\L_k(\g)$-module. Hence,
$L({\lam}_q)$ is a $\on{Zhu}(\L_k(\g))$-module.
This immediately implies $I_k\subset J_{ \lam_q}$,
and therefore 
$\on{Var}(J_{ \lam_q})\subset \on{Var}(I_k)=\overline{\O}_q$.

\end{proof}

\section{Proof of Theorem \ref{MainThm}}

In view of Proposition \ref{Pro:included},
it suffices to show that 
\begin{align}
\dim \on{Var}(J_{\lam_q})=\dim \overline{ \O}_q.
\end{align}
Since $\lam_q$ is a regular dominant weight, we have
\begin{align*}
\dim \on{Var}(J_{\lam_q})=\dim \mc{N}-|\Delta(\lam_q)|
\end{align*}
by \cite[Corollary 3.5]{Jos78},
 %, \cite[Corollary 5.3.3]{Losev09},
where $$\Delta(\lam_q)=\{\alpha\in \Delta\mid \bra \lam_q+\rho,\alpha^{\vee}\ket \in \Z\}=\widehat{\Delta}(\hat\lam_q)\cap \Delta.$$
Therefore it is sufficient to show that 
\begin{align}
|\Delta(\lam_q)|=\dim \mc{N}-\dim \overline{\O}_q,
\label{eq:required-equality}
\end{align}
or rather (substituting an equivalent expression for the right hand side), that 
$$|\Delta(\lam_q)| = \dim \g^f - \on{rk}(\g),$$ 
for $f \in \O_q$, where $\g^f$ is the centralizer of $f$ in $\g$. 
The rest of the section is devoted to a case-by-case proof of \eqref{eq:required-equality}. 

%CAN YOU FINISH THE PROOF?
%Note that \eqref{eq:required-equality} is equivalent to  \cite[Condition (II) on p.2]{ElaKacVin08}
%for $(q,\check{r})=1$.
%
%
%I have checked for the classical cases, for $(q,\check{r})=1$. 
%This is not exactly equivalent to  \cite[Condition (II) on p.2]{ElaKacVin08} for $\sp_n$ 
%and $\so_n$, $n$ odd, because $d(q)=d_\g(q)$ computes $\#\{\alpha > 0\colon q | {\rm ht}(\alpha)\}$ 
%and we wish to compute $\#\{\alpha > 0 \colon q | {\rm ht}(\alpha^\vee)\}$. 
%So we have to compare with $d_{\g^\vee}(q)$. Then it works! 

\subsection{Proof of \eqref{eq:required-equality} for the simple classical Lie algebras}
\label{sub:classical}
In this paragraph, we show that \eqref{eq:required-equality} holds for all simple Lie 
algebra $\g$ of classical type. 
Let $n\in \Z_{>0}$ and assume that $\g$ is either $\sl_n$, $\so_n$ or $\sp_n$. 

First we set up notation. We denote by $\P(n)$ the set 
of partitions of $n$ and, unless otherwise specified, we write 
an element $\bs{\lambda}$ 
of $\P(n)$ 
as a decreasing sequence $\bs{\lambda}=(\lambda_1,\ldots,\lambda_r)$ of positive integers. 
Thus, 
$$
\lambda_1 \ge \cdots \ge \lambda_r \ge 1\quad 
\text{ and }\quad  
\lambda_1 + \cdots + \lambda_r  = n.
$$ 
We write $\bs{\lam}^t$ for the dual partition of $\bs{\lam}$.

$\bullet$ By \cite[Theorem~5.1.1]{CMa}, nilpotent orbits of $\sl_{n}$ are 
parametrized by $\P(n)$. For $\bs{\lambda}\in\P(n)$, 
we shall denote by 
$\O_{\bs{\lambda}}$ the corresponding nilpotent orbit of 
$\sl_n$. 
For $f \in \O_{\bs{\lambda}}$ the dimension of the centralizer of $f$ in $\g=\sl_{n}$ is given by
$$\dim \g^{f}= \sum_{i=1}^t \mu_i^2 - 1,$$
where $(\mu_1,\ldots,\mu_t) = \bs{\lam}^t$.

\noindent 
$\bullet$ Set 
$$\P_{1}(n):=\{\bs{\lambda} \in \P(n)\; \colon \; \text{multiplicity of each even 
part is even}\}.
$$ 
By \cite[Theorems 5.1.2 and 5.1.4]{CMa}, 
nilpotent orbits of $\so_{n}$ 
are parametrized by $\P_1(n)$, with the exception that each 
{\em very even} 
partition $\bs{\lambda} \in\P_{1}(n)$ (i.e., $\bs{\lambda}$ has only even parts) 
corresponds to two nilpotent orbits.
For $\bs{\lambda}\in \P_1(n)$, not very even, we shall denote by $\O_{\bs{\lambda}}$ 
the corresponding nilpotent orbit of $\so_n$. 
(Very even nilpotent orbits do not appear in this work.) 
For $f \in \O_{\bs{\lambda}}$ the dimension of the centralizer of $f$ in $\g=\so_{n}$ is given by
$$\dim \g^{f}= \frac{1}{2}\left(\sum_{i=1}^t \mu_i^2 -  
 \#\{i \; \colon \;  \text{$\lam_i$ is odd}\}\right),$$
where $(\mu_1,\ldots,\mu_t) = \bs{\lam}^t$.

\noindent 
$\bullet$ 
Set 
$$
\P_{-1}(n):=\{\bs{\lambda} \in \P(n)\; \colon \; \text{multiplicity of each odd part is even}\}.
$$ 
By \cite[Theorem~5.1.3]{CMa}, nilpotent orbits of $\sp_{n}$ 
are parametrized by $\P_{-1}(n)$. 
For $\bs{\lambda}=(\lambda_{1},\dots ,\lambda_{r})\in \P_{-1}(n)$, 
we shall denote by $\O_{\bs{\lambda}}$ 
the corresponding nilpotent orbit of $\sp_{n}$. 
For $f \in \O_{\bs{\lambda}}$ the dimension of the centralizer of $f$ in $\g=\sp_{n}$ is given by
$$\dim \g^{f}= \frac{1}{2}\left(\sum_{i=1}^t \mu_i^2 + \#\{i \; \colon \; \text{$\lam_i$ is odd}\}\right),$$
where $(\mu_1,\ldots,\mu_t) = \bs{\lam}^t$.

We recall that the \emph{height} of an element $\beta = \sum_{\alpha \in \Delta} k_\alpha \alpha$ of the root lattice $Q$ is by definition the integer $\on{ht}(\beta) = \sum_{\alpha \in \Delta} k_\alpha$. Equivalently $\on{ht}(\beta) = \bra \check{\rho}, \beta \ket$. In order to show that \eqref{eq:required-equality} holds in the classical cases, 
we exploit a  combinatorial  formula for
$\# \{\alpha \in \Delta \colon 
q | \on{ht}(\alpha) \in \Z \}$ proved in \cite[\S 3.2]{ElaKacVin08}. To state the formula we first write 
$$n=q m_0+s_0,$$ 
with $1 \le s_0 \le q$ if $\g=\so_n$ and $n$ even, and $0 \le s_0 \le q-1$ 
in all other cases. We consider the partition $(q^{m_0},s_0)$ of $n$, and we set 
$$
K_n(q) =m_0^2 (q-s_0)+ (m_0+1)^2 s_0.
$$
In fact if $\g = \sl_n$ then $K_n(q)-1$ is the dimension of the centralizer of a nilpotent 
element $f \in \g$ associated with the partition $(q^{m_0},s_0)$. By \cite[\S 3.2]{ElaKacVin08}, 
\begin{align}
\label{eq:EKV_relation}
\# \{\alpha \in \Delta \colon 
q | \on{ht}(\alpha) \in \Z \} = d_\g(q) - \on{rk}(\g),
\end{align}
where $d_\g(q)$ is expressed in terms of $n$ and $q$ as follows: 
\begin{enumerate}
\item for $\g=\sl_n$,
$$d_\g(q)=K_n(q)-1.$$
\item for $\g=\sp_n$,
$$2 d_\g(q)=K_n(q)+\begin{cases} m_0, & \text{ if } q \text{ is odd, }m_0  \text{ is even}, \\
m_0+1, & \text{ if } q,m_0  \text{ are odd}, \\
0, & \text{ if } q  \text{ is even}.\\
\end{cases}$$
\item for $\g=\so_n$, $n$ odd, 
$$2 d_\g(q)=K_n(q)- \begin{cases} m_0, & \text{ if } q,m_0 \text{ are odd},\\
m_0+1, & \text{ if } q \text{ is odd, }m_0  \text{ is even}, \\
2m_0+1, & \text{ if } q  \text{ is even}.\\
\end{cases}$$
\item for $\g=\so_n$, $n$ even, 
$$2 d_\g(q)=K_n(q)- \begin{cases} m_0, & \text{ if } q  \text{ is odd, }m_0  \text{ is even}, \\
m_0+1, &  \text{ if } q,m_0 \text{ are odd},\\
2m_0, &  \text{ if } q,m_0 \text{ are even},\\
2(m_0+1), & \text{ if } q  \text{ is even, }m_0 \text{ is odd}.\\
\end{cases}$$
\end{enumerate}

\subsubsection{First case: $(q,\check{r})=1$}
\label{subsub:princ}
In this case we have 
$$
|\Delta(\lam_q)|
= \# \{\alpha \in \Delta \colon \bra \rho,\alpha^{\vee}\ket \in q\,\Z \}.
%
%= \# \{\alpha \in \Delta \colon q | \on{ht}(\alpha^\vee) \}.
$$
%As a result, in the case where $\g$ is simplify laced, 
%that is $\g=\sl_n$ or $\g=\so_n$ with $n$ even, 
%it is enough to verify that 
%\begin{align}
%\label{eq:verif_simply_laced}
%\dim \g^f  = d_\g(q)
%\end{align}
%for any $f \in \O_q$. 
%In the other cases, that is, $\g=\sp_n$ or $\g=\so_n$ with $n$ odd, 
%%%%%%%%%%%%%
%%%%%%%%%%%%%
% We observe (this holds whether or not $(q,\check{r})=1$) that 
% $$
% \# \{\alpha \in \Delta=\Delta(\g) \colon 
% q | \on{ht}(\alpha^\vee)  \} 
% =\# \{\alpha \in \Delta(\check{\g}) \colon 
% q | \on{ht}(\alpha) \} = d_{\check{\g}}(q) - \on{rk}(\g),$$
% by \eqref{eq:EKV_relation} since 
% $\on{rk}(\g)= \on{rk}(\check{\g})$, 
% we conclude that it is enough to verify that the equality 
% \begin{align}
% \label{eq:verification_d(q)}
% \dim \g^f  = d_{\check{\g}}(q)
% \end{align}
% holds for any $f \in \O_q$. 
% The rest of this paragraph is devoted to checking \eqref{eq:verification_d(q)} 
% for each classical type.
From \eqref{eq:EKV_relation}, using $\on{rk}(\g)= \on{rk}(\check{\g})$, we have 
\begin{align*}
%\# \{\alpha \in \Delta(\g) \colon \bra \rho,\alpha^{\vee}\ket \in q\,\Z \}
%
 \# \{\alpha \in \Delta(\check{\g}) \colon \bra \rho^\vee,\alpha\ket \in q\,\Z \} 
&= \# \{\alpha \in \Delta(\check{\g}) \colon q | \on{ht}(\alpha) \} \\
&= d_{\check{\g}}(q) - \on{rk}(\g),
\end{align*}
(this holds whether or not $(q,\check{r})=1$). 
Using Lemma \ref{Lem:verif_d(q)_dual} below, we conclude that it is enough to verify the equality
\begin{align}
\label{eq:verification_d(q)}
\dim \g^f  = d_{\g}(q)
\end{align}
for $f \in \O_q$. 

\begin{lemma}
\label{Lem:verif_d(q)_dual} 
For any integer $q$, we have $d_{\check{\g}}(q)=d_{\g}(q)$. 
\end{lemma}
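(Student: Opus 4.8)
The plan is to reduce the lemma to a comparison of the explicit formulas for $d_\g(q)$ displayed above, using the classification of Langlands duals among classical Lie algebras. Since the root system of $\check{\sl}_n$ is of type $A_{n-1}$ and that of $\check{\so}_{2n}$ is of type $D_n$, and since each of these root systems is isomorphic to its own dual, the quantity $d_{\check\g}(q)$ is computed by exactly the same formula as $d_\g(q)$, with the same value of the integer parameter; hence the lemma is immediate in types $A$ and $D$. The only substantial case is the pair of mutually dual algebras $\g=\so_{2n+1}$ (type $B_n$) and $\check\g=\sp_{2n}$ (type $C_n$), so it remains to prove $d_{\so_{2n+1}}(q)=d_{\sp_{2n}}(q)$.

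For this I would set $N=2n$ and perform the Euclidean division $N=qm_0+s_0$ with $0\le s_0\le q-1$, keeping in mind that $N$ is even. I then compare this with the division of $N+1=2n+1$: if $s_0\le q-2$ the quotient is unchanged ($m_0^B=m_0$) and the remainder becomes $s_0+1$, whereas if $s_0=q-1$ the quotient jumps to $m_0^B=m_0+1$ and the remainder becomes $0$ — and in this latter ``jump'' situation evenness of $N$ forces $q$ to be odd, and then $N+1=q(m_0+1)$ forces $m_0$ to be even. A short direct computation from $K_N(q)=m_0^2(q-s_0)+(m_0+1)^2 s_0$ shows that in \emph{both} cases $K_{N+1}(q)=K_N(q)+2m_0+1$. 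Substituting this into the two displayed formulas, the desired equality $2d_{\so_{N+1}}(q)=2d_{\sp_N}(q)$ becomes the elementary identity $\delta_B+\epsilon_C=2m_0+1$, where $\epsilon_C$ denotes the correction term \emph{added} to $K_N(q)$ in the $\sp_N$ formula (computed from $q$ and $m_0$) and $\delta_B$ the one \emph{subtracted} in the $\so_{N+1}$ formula (computed from $q$ and $m_0^B$, which is $m_0$ or $m_0+1$ according to the case above).

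It then suffices to verify $\delta_B+\epsilon_C=2m_0+1$ by running through the parities of $q$ and $m_0$: if $q$ is even, only the unchanged-quotient case can occur, and there $\epsilon_C=0$ while $\delta_B=2m_0^B+1=2m_0+1$; if $q$ is odd, in the unchanged-quotient case one checks the two sub-cases $m_0$ even (then $\epsilon_C=m_0$ and $\delta_B=m_0^B+1=m_0+1$) and $m_0$ odd (then $\epsilon_C=m_0+1$ and $\delta_B=m_0^B=m_0$), and in the jump case $m_0$ is even, $\epsilon_C=m_0$, and since $m_0^B=m_0+1$ is odd, $\delta_B=m_0^B=m_0+1$. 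In every case the sum equals $2m_0+1$, which finishes the proof. I do not expect any genuine obstacle here: the argument is purely elementary, and the only point requiring attention is the bookkeeping around the remainder $s_0=q-1$, where the quotient increments and where the parity constraints imposed by $N$ being even have to be tracked consistently through the two formulas.
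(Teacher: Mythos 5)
Your argument is correct, and the computation checks out: the reduction to the single non-self-dual classical pair $(\so_{2n+1},\sp_{2n})$ is legitimate, the identity $K_{N+1}(q)=K_N(q)+2m_0+1$ holds in both the unchanged-quotient and the jump case, and your parity bookkeeping giving $\delta_B+\epsilon_C=2m_0+1$ is exhaustive (in particular you correctly observe that the jump case forces $q$ odd and $m_0$ even). It is, however, a genuinely different proof from the one in the paper. The paper argues uniformly, with no case analysis and no use of the explicit classical formulas: by \eqref{eq:EKV_relation} one has $d_{\g}(q)=\#\{\alpha\in\Delta(\g)\colon q\mid \on{ht}(\alpha)\}+\on{rk}(\g)$, and likewise for $\check{\g}$, and by Steinberg's theorem \cite{Steinberg} the partition of $\Delta_+$ by root heights is dual to the partition given by the exponents; since $\g$ and $\check{\g}$ have the same rank and the same exponents, their height distributions coincide and the equality follows for every simple $\g$ at once. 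Your route buys an elementary, self-contained verification directly from the formulas of \cite{ElaKacVin08} as quoted in the text, at the price of a case analysis and of being tied to the classical types: since the lemma is invoked again in \S\ref{sub:exceptional} for exceptional $\g$, your proof should be supplemented by the (trivial) remark that all exceptional types are Langlands self-dual, so the equality holds there for the same reason as in types $A$ and $D$. The paper's conceptual argument avoids both the case-by-case check and this scope issue, which is precisely what one wants given that the lemma is used beyond the classical setting.
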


\begin{proof}
%It suffices to check the statement for $\g=\so_{n}$ with $n$ odd, and $\g=\sp_n$. 
Notice that $\g$ and $\check{\g}$ 
share the same rank $\ell$ and the same exponents $m_1 \le m_2\le \cdots \le m_\ell$. 
But it is known that, denoting the number of positive roots of fixed height $i$ by $p_i$, 
the sequence $p_1 \ge p_2 \ge \cdots \ge p_{h-1}$, where $h$ is the Coxeter number of $\g$, 
gives a partition of $\Delta_+$ which is the dual partition to the partition $m_{\ell} \ge \cdots \ge m_2 \ge m_1$ 
given by the exponents (\cite[Theorem 9.1]{Steinberg}). 
Then the statement follows from the equality $d_{\check{\g}}(q) 
= \# \{\alpha \in \Delta(\check{\g}) \colon q | \on{ht}(\alpha) \}+ \on{rk}(\g)$.
\end{proof}

%\begin{Rem}
%It would be nice to have a conceptual proof of 
%Lemma \ref{Lem:verif_d(q)_dual}.
%%, using the Weyl group of $\g$.  
%\end{Rem}
The rest of this paragraph is devoted to checking \eqref{eq:verification_d(q)} 
for each classical type.
 
For $\g=\sl_n$ (resp.~$\g=\so_n$, $\g=\sp_n$), we 
let $\bs{\lam}$ be the the element of $\mathscr{P}(n)$ (resp.~$\mathscr{P}_{1}(n)$, $\mathscr{P}_{-1}(n)$) 
associated with the nilpotent orbit $\O_q$, 
and we choose a nilpotent element $f \in \O_q$.

\subsubsection*{Case $\g=\sl_n$}
In this case $\g=\check{\g}$ 
and by \cite[Table 2]{Ara09b}, $\bs{\lam} = (q^m,s)$, so \eqref{eq:verification_d(q)} 
obviously holds.  

\subsubsection*{Case $\g=\so_n$, $n$ even}  
In this case $\g=\check{\g}$. Following \cite[Table 2]{Ara09b} there are four sub-cases to analyse.
%either $\bs{\lam} = (q^m,s)$ with $q,m$ odd, $0 \le s\le q$ odd, 
%or $\bs{\lam} = (q^m,s,1)$ with $q$ odd, $m$ even, $0 \le s\le q-1$ odd, 
%or $\bs{\lam} = (q+1,q^m,s,1)$ with $q,m$ even, $0 \le s\le q-1$ odd, 
%or $\bs{\lam} = (q+1,q^m,q-1,s,1)$ with $q,m$ even, $0 \le s\le q-1$ odd. 
%We consider all these cases. 
\begin{enumerate}
\item $\bs{\lam} = (q^m,s)$ with $q,m$ odd, $0 \le s\le q$ odd. 
Then, clearly, 
$$\dim \g^f=\frac{1}{2}(m^2(q-s) +(m+1)^2 s - m-1) = \dim d_\g(q),$$
whence \eqref{eq:verification_d(q)}. 

\item $\bs{\lam} = (q^m,s,1)$ with $q$ odd, $m$ even, $0 \le s\le q-1$ odd. 
Then 
$$\dim \g^{f} = \frac{1}{2}(m^2(q-s)+(m+1)^2 (s-1)+(m+2)^2 -m-2).$$
On the other hand, since $n=m q +s+1$ with $0 \le s+1 \le q$, 
$$\d_{\g}(q)=\frac{1}{2}(m^2 (q -s-1) +(m+1)^2 (s+1) - m), $$ 
whence \eqref{eq:verification_d(q)}. 

\item $\bs{\lam} = (q+1,q^m,s)$ with $q,m$ even, $0 \le s\le q-1$ odd. 
Then 
$$\dim \g^{f} = \frac{1}{2}(1+ (m+1)^2(q-s)+(m+2)^2 s -2).$$
On the other hand, since $n=(m+1)q +s+1$ with $0 \le s+1 \le q$, 
$$\d_{\g}(q)=\frac{1}{2}((m+1)^2 (q -s-1) +(m+2)^2 (s+1) - 2(m+2)), $$ 
whence \eqref{eq:verification_d(q)}. 

\item $\bs{\lam} = (q+1,q^m,q-1,s,1)$ with $q,m$ even, $0 \le s\le q-1$ odd. 
Then 
$$\dim \g^{f} = \frac{1}{2}(1+ (m+1)^2 + (m+2)^2 (q-1-s)+(m+3)^2 (s-1) +(m+4)^2 -4).$$
On the other hand, since $n=(m+2)q +s+1$ with $0 \ge s+1 \ge q$, 
$$\d_{\g}(q)=\frac{1}{2}((m+2)^2 (q -s-1) +(m+3)^2 (s+1) - 2(m+2)), $$ 
whence \eqref{eq:verification_d(q)}. 
\end{enumerate}

\subsubsection*{Case $\g=\so_n$, $n$ odd}
%In this case $\check{\g}=\sp_{n-1}$. 
Following \cite[Table 2]{Ara09b}, there are two sub-cases to analyse. 
\begin{enumerate}
\item $\bs{\lam} = (q^m,s)$ with $q$ odd, $m$ even, $0 \le s\le q$ odd.
Then 
$$\dim \g^{f} = \frac{1}{2}(m^2(q-s)+(m+1)^2 s -m-1) = \d_{\g}(q),$$
%On the other hand, 
%$n-1=qm+s-1$ with $0 \le s -1\le q-1$. 
%So 
%$$\d_{\check{\g}}(q)=\frac{1}{2}(m^2 (q -s+1) +(m+1)^2 (s-1) +m) ,$$ 
%$$\d_{\g}(q)=\frac{1}{2}(m^2 (q -s) +(m+1)^2 s -m-1) ,$$ 
whence \eqref{eq:verification_d(q)}. 

\item $\bs{\lam} = (q^m,s,1)$ with $q,m$ odd, $0 \le s\le q-1$ odd. 
Then 
$$\dim \g^{f} = \frac{1}{2}(m^2(q-s)+(m+1)^2 (s-1)+(m+2)^2  -m-2).$$
On the other hand, $n-1=qm+s$ with $0 \le s \le q-1$. 
So
$$\d_{\check{\g}}(q)=\frac{1}{2}(m^2 (q -s) +(m+1)^2 s +m+1) = d_\g(q) ,$$ 
by Lemma \ref{Lem:verif_d(q)_dual}, whence \eqref{eq:verification_d(q)}. 
\end{enumerate}

\subsubsection*{Case $\g=\sp_n$}
%Note that $\check{\g}=\so_{n+1}$. 
Following \cite[Table 2]{Ara09b}, there are two sub-cases to analyse. 
\begin{enumerate}
\item $\bs{\lam} = (q^m,s)$ with $q$ odd, $m$ even, $0 \le s\le q-1$ even.
Then 
$$\dim \g^{f} = \frac{1}{2}(m^2(q-s)+(m+1)^2 s +m)=\d_{\g}(q),$$
%On the other hand, 
%$n+1=q m +s+1$ with $0 \le s+1 \le q+1$. 
%\begin{itemize}
%\item If $s \le q-2$, then $0 \le s+1< q$ and we have 
%$$\d_{\check{\g}}(q)=\frac{1}{2}(m^2(q-s-1)+(m+1)^2 (s+1) -m-1),$$ 
%whence \eqref{eq:verification_d(q)}. 
%\item If $s =q-1$, then $n+1=(m+1)q$ and we have 
%$$\d_{\check{\g}}(q)=\frac{1}{2}((m+1)^2 q  -m-1) = \dim \g^{f},$$ 
%whence \eqref{eq:verification_d(q)}. 
%\end{itemize}
%$$\d_{\g}(q)=\frac{1}{2}(m^2 (q -s) +(m+1)^2 s +m) ,$$ 
whence \eqref{eq:verification_d(q)}. 

\item $\bs{\lam} = (q^m,q-1,s)$ with $q$ odd, $m$ even, $0 \le s\le q-1$ even. 
Then 
$$\dim \g^{f} = \frac{1}{2}(m^2+(m+1)^2 (q-1-s)+(m+2)^2 s +m).$$
On the other hand, $n+1=q (m+1) +s$ with $0 \le s \le q-1$. 
So 
$$\d_{\check{\g}}(q)=\frac{1}{2}((m+1)^2 (q -s) +(m+2)^2 s)  = d_\g(q),$$ 
by Lemma \ref{Lem:verif_d(q)_dual}, whence \eqref{eq:verification_d(q)}. 
\end{enumerate}

\subsubsection{Second case: $(q,\check{r}) =\check{r}$}
\label{subsub:coprinc}
In this case $q$ is necessarily even, $\check{r}=2$ and either $\g=\so_n$ with $n$ odd or else $\g=\sp_n$.
Note that 
\begin{align}
\nonumber 
|\Delta(\lam_q)| & = \# \{\alpha \in \Delta \colon 
\bra \check{\rho},\alpha^{\vee}\ket \in q\,\Z \}&\\ 
\label{eq:Delta_coprincipal}
& =
 \# \{\alpha \in \DeltaLong \colon 
q | \on{ht}(\alpha)  \}+
\# \{\alpha \in \DeltaShort \colon 
(q/2) | \on{ht}(\alpha)  \}.&
\end{align}
%where $ \DeltaLong$ and  $\DeltaShort$ 
%are the sets of long roots and short roots, respectively. 

\subsubsection*{Case $\g=\so_n$, $n$ odd}
Firstly we rewrite \eqref{eq:Delta_coprincipal} as
\begin{align*}
 |\Delta(\lam_q)|  & = \# \{\alpha \in \Delta \colon 
q | \on{ht}(\alpha) \} & \\
& \qquad + \# \{\alpha \in \Delta_{\text{short}} \colon 
(q/2) | \on{ht}(\alpha)  \}
- \# \{\alpha \in \Delta_{\text{short}} \colon 
q | \on{ht}(\alpha) \}.&
\end{align*}
There are exactly $\on{rk}(\g) = (n-1)/2$ positive short roots, one of height $i$ for each integer $i=1,2,\ldots,(n-1)/2$. It follows easily that if we write $n=q m_0 +s_0$ where $0 \le s_0 \le q-1$, then we have
\begin{align*}
& \# \{\alpha \in \Delta_{\text{short}} \colon 
(q/2) | \on{ht}(\alpha)  \}
- \# \{\alpha \in \Delta_{\text{short}} \colon 
q | \on{ht}(\alpha)  \} %& \\
%&\qquad \qquad 
= \begin{cases} 
m_0  & \text{ if } m_0 \text{ is even}, \\
m_0 +1 & \text{ if } m_0 \text{ is odd}.
\end{cases}
\end{align*}
%& = d_\g(q/2) - \on{rk}(\g) -   \# \{\alpha \in \DeltaLong \colon 
%\frac{q}{2} | \on{ht}(\alpha) \in \Z \}. &
We combine equation \eqref{eq:EKV_relation} with the preceding observations to deduce that, in the present case, \eqref{eq:required-equality} is equivalent to
\begin{align}
\label{eq:verification_d(q)-coprincipal-so_n}
\dim \g^f = d_\g(q) +\begin{cases} 
m_0 & \text{ if } m_0 \text{ is even}, \\
m_0 +1 & \text{ if } m_0 \text{ is odd}.
\end{cases}
\end{align}
We now verify that \eqref{eq:verification_d(q)-coprincipal-so_n} holds. 
Following \cite[Table 3]{Ara09b}, there are two sub-cases to analyse.

\begin{enumerate} 
\item $\bs{\lam}=(q^m,s)$, with $q,m$ even, $0 \le s \le q-1$ odd.   
In this case
$$\dim \g^f = \frac{1}{2}(m^2 (q-s)+(m+1)^2 s-1) $$
On the other hand $m_0=m$ is even, and so
$$d_\g(q)+m_0 = \frac{1}{2}(m^2 (q-s)+(m+1)^2 s-2m-1) + m $$
whence \eqref{eq:verification_d(q)-coprincipal-so_n}.

\item   $\bs{\lam}=(q^m,q-1,s,1)$, with $q,m$ even, $0 \le s\le q-1$ odd.   
In this case
$$\dim \g^f = \frac{1}{2}(m^2 +(m+1)^2(q-1-s)+(m+2)^2 (s-1)+(m+3)^2-3) $$
On the other hand $m_0=m+1$ is odd, and so
$$d_\g(q)+m_0 + 1 = \frac{1}{2}((m+1)^2 (q-s)+(m+2)^2 s-2(m+1)-1) +m+2$$
whence \eqref{eq:verification_d(q)-coprincipal-so_n}.
\end{enumerate}

\subsubsection*{Case $\g=\sp_n$}
There are exactly $\on{rk}(\g)=n/2$ positive long roots, one of height $2i+1$ for each integer $i=0,1,\ldots,n/2-1$. In particular
$$ \# \{\alpha \in \DeltaLong \colon 
q | \on{ht}(\alpha)  \}=0$$
since $q$ is even. 
Hence \eqref{eq:Delta_coprincipal} becomes
\begin{align*}
|\Delta(\lam_q)| 
& = \# \{\alpha \in \Delta \colon 
(q/2) | \on{ht}(\alpha)  \} 
- \# \{\alpha \in \DeltaLong \colon 
(q/2) | \on{ht}(\alpha)  \}. & 
%& = d_\g(q/2) - \on{rk}(\g) -   \# \{\alpha \in \DeltaLong \colon 
%\frac{q}{2} | \on{ht}(\alpha) \in \Z \}. &
\end{align*}
If $q/2$ is even, then by the above remarks the set $\{\alpha \in \DeltaLong \colon 
(q/2) | \on{ht}(\alpha)  \}$ is empty. If $q/2$ is odd, and we write $n = \left(q/2\right)m_1 +s_1$ where $0 \le s_1 \le \frac{q}{2}-1$, then it follows easily that
$$\{\alpha \in \DeltaLong \colon 
(q/2) | \on{ht}(\alpha)  \} = \begin{cases} 
m_1 & \text{ if } m_1 \text{ is  even,} \\
m_1+1 & \text{ if } m_1 \text{ is  odd.} \\
\end{cases}$$

By combining these facts with \eqref{eq:EKV_relation} we deduce that \eqref{eq:required-equality} is equivalent, in the present case, to
\begin{align}
\label{eq:verification_d(q)-coprincipal-sp_n}
\dim \g^f = d_\g(q/2) - \begin{cases} 
0 & \text{ if } q/2 \text{ is  even,} \\
m_1& \text{ if } q/2 \text{ is odd and } m_1 \text{ is even }, \\
m_1+1 &\text{ if } q/2 \text{ is odd and }  m_1 \text{ is odd}. \\
\end{cases}
\end{align}
We now verify that \eqref{eq:verification_d(q)-coprincipal-sp_n} holds. 
Following \cite[Table 3]{Ara09b}, there are three sub-cases to analyse. 

\begin{enumerate} 
\item $\bs{\lam}=((q/2)^m,s)$, with $q/2,m$ even, $0 \le s\le q/2-1$ even.   
In this case 
$$\dim \g^f = \frac{1}{2}(m^2(q/2-s)+(m+1)^2 s) =d_\g(q/2)-0,$$
whence \eqref{eq:verification_d(q)-coprincipal-sp_n}.

\item   $\bs{\lam}=(q/2+1,(q/2)^m,s)$, with $q/2$ odd, $m$ even, $0 \le s\le q/2-1$ even.   
In this case 
$$\dim \g^f = \frac{1}{2}(1+(m+1)^2(q/2-s)+(m+2)^2 s+m).$$
On the other hand, 
$n=(m+1)(q/2) +s+1$, with $m_1=m+1$ odd, and $0 \le s+1 \le q/2$. 

\begin{itemize} 
\item If $s< q/2-1$, then $0 \le s+1 <q/2$, and 
\begin{align*}
d_\g(q/2) - m_1& =\frac{1}{2}((m+1)^2(q/2-s-1)+(m+2)^2(s+1)+m+2) - m-2, &
\end{align*} 
whence \eqref{eq:verification_d(q)-coprincipal-sp_n}. 

\item If $s=q/2-1$, then $n=(m+2)(q/2)$ with $m_1=m+2$ even,  
and 
\begin{align*}
d_\g(q/2) - m_1  =\frac{1}{2}((m+1)^2(q/2)+(m+2)^2+m+2) - m-2 ,
\end{align*} 
whence \eqref{eq:verification_d(q)-coprincipal-sp_n}. 
\end{itemize}

\item  $\bs{\lam}=(q/2+1,(q/2)^m,q/2-1,s)$, with $q/2$ odd, $m$ even, $0 \le s\le q/2-1$ even.  
In this case 
$$\dim \g^f = \frac{1}{2}(1+(m+1)^2 +(m+2)^2(q/2-1-s)+(m+2)^2 s+m).$$
On the other hand, 
$n=(m+2)(q/2) + s$, with $m_1=m+2$ even,  $0 \le s \le q/2-1$, 
and 
\begin{align*}
d_\g(q/2) - m_1& =\frac{1}{2}((m+2)^2 (q/2-s)+(m+3)^2(s+1)-m-2) - (m+2) ,&
\end{align*} 
whence \eqref{eq:verification_d(q)-coprincipal-sp_n}. 
\end{enumerate}

\subsection{Proof of \eqref{eq:required-equality} for the simple exceptional Lie algebras}
\label{sub:exceptional}
In this paragraph, we show that \eqref{eq:required-equality} holds for all simple Lie 
algebra $\g$ of exceptional type. 

% Jethro says: I switched ''d_{\check{\g}}(\g)'' to ''d_{\check{\g}}(q)'' below

As in \S\ref{subsub:princ} above, when $(q,\check{r})=1$ we have 
using Lemma \ref{Lem:verif_d(q)_dual}, 
\begin{align*}
|\Delta(\lam_q)|
&= \# \{\alpha \in \Delta \colon 
\bra \rho,\alpha^{\vee}\ket \in q \, \Z \} \\
&=
 \# \{\alpha \in \Delta \colon 
q | \on{ht}(\alpha^\vee)  \} = d_{\check{\g}}(q) -\on{rk}(\g) 
= d_{\g}(q) -\on{rk}(\g),
\end{align*}
%As a result, in the case where $\g$ is simplify laced, 
%that is $\g=\sl_n$ or $\g=\so_n$ with $n$ even, 
%it is enough to verify that 
%\begin{align}
%\label{eq:verif_simply_laced}
%\dim \g^f  = d_\g(q)
%\end{align}
%for any $f \in \O_q$. 
%In the other cases, that is, $\g=\sp_n$ or $\g=\so_n$ with $n$ odd, 
%Observing that 
%$$ \# \{\alpha \in \Delta=\Delta(\g) \colon 
%q | \on{ht}(\alpha^\vee) \in \Z \} 
%=\# \{\alpha \in \Delta(\check{\g}) \colon 
%q | \on{ht}(\alpha) \in \Z \} = d_{\check{\g}}(q) - \on{rk}(\g),$$
and hence, by \eqref{eq:EKV_relation}, 
it is enough to verify that the equality 
\eqref{eq:verification_d(q)} 
holds for any $f \in \O_q$. 
Note that the last three equalities hold even if $(q,\check{r})\not=1$. 
Using Tables \ref{Tab:G2-princ}, \ref{Tab:F4-princ}, \ref{Tab:E6-princ}, \ref{Tab:E7-princ} and \ref{Tab:E8-princ} below, 
we easily verify that \eqref{eq:verification_d(q)} holds 
for $\g$ any of the simple Lie algebras of type $G_2$, $F_4$, 
$E_6$, $E_7$, $E_8$, and for $(q,\check{r})=1$.  
In Tables \ref{Tab:G2-princ}, \ref{Tab:F4-princ}, we put in parentheses 
the values of $q$ which are not coprime with $\check{r}$. 
As we can see, the equality \eqref{eq:verification_d(q)} still holds in these cases.

As in \S\ref{subsub:coprinc} above, when $(q,\check{r})=\check{r}$ we have 
\begin{align*}
|\Delta(\lam_q)|
&=|\Delta(\check{\rho}/{q}-\rho)| \\
&=
 \# \{\alpha \in \DeltaLong \colon 
q | \on{ht}(\alpha) \}+
\# \{\alpha \in \DeltaShort \colon (q/\check{r}) | \on{ht}(\alpha)  \}. 
\end{align*}
Using this, we compute $|\Delta(\lam_q)|$ 
for the simple Lie algebras of type $G_2$ and $F_4$ with $(q,\check{r})=\check{r}$. 
From Tables \ref{Tab:G2-coprinc} and \ref{Tab:F4-coprinc}, 
we conclude that \eqref{eq:verification_d(q)} holds.

%%%%% G2 CASES

\bigskip

\begin{minipage}{.5\textwidth}
\centering
%\flushleft
{\tiny
\begin{tabular}{|cccc|}
\hline
$q$ & $\mc{N}_{q}$ & $\dim \mc{N}_{q}$ & $|\Delta(\lam_q)|$ \\
\hline
$2$ & $\tilde{A}_1$ & $8$&  $4$ \\
$(3),4,5$ & $G_2(a_1)$ & $10$ & $2$ \\
$(6), \ge 7$ & $G_2$ & $12$ &$0$ \\
\hline
\end{tabular}
%\\[0.5em] 
\captionof{table}{Data for $G_2$, $(q,\check{r})=1$}
\label{Tab:G2-princ}
}
\end{minipage}
\begin{minipage}{.5\textwidth}
\centering
%\flushright
{\tiny
\begin{tabular}{|cccc|}
\hline
$q$ & ${}^L\mc{N}_{q/\check{r}}$ & $\dim {}^L\mc{N}_{q/\check{r}}$ & $|\Delta(\lam_q)|$ \\
\hline
$3$ & ${A}_1$ & $6$& $6$ \\
$6, 9$ & $G_2(a_1)$ & $10$ & $2$ \\
$\ge 12$ & $G_2$ & $12$ & $0$ \\
\hline
\end{tabular}
%\\[0.5em] 
\captionof{table}{Data for $G_2$, $(q,\check{r})=\check{r}$}
\label{Tab:G2-coprinc}
}
\end{minipage}

\bigskip

%%%%% F4 CASES

\begin{minipage}{.5\textwidth}
%\flushleft
\centering
{\tiny
\begin{tabular}{|cccc|}
\hline
$q$ & $\mc{N}_{q}$ & $\dim \mc{N}_{q}$ & $|\Delta(\lam_q)|$ \\
\hline
$(2)$ & $A_1+\tilde{A}_1$ &$28$&  $20$ \\
$3$ & $\tilde{A}_2+{A}_1$ & $36$ & $12$  \\
$(4), 5$ & $F_4(a_3)$ &$40$& $8$  \\
$(6), 7$& $F_4(a_2)$&$44$& $4$  \\
$(8), 9, (10),11$& $F_4(a_1)$& $46$& $2$  \\
$(12), \ge 13$ &$F_4$&$48$&$0$\\
\hline
\end{tabular}
%\\[0.5em] 
\captionof{table}{Data for $F_4$, $(q,\check{r})=1$}
\label{Tab:F4-princ}
}
\end{minipage}
\begin{minipage}{.5\textwidth}
%\flushright
\centering
{\tiny
\begin{tabular}{|cccc|}
\hline
$q$ & ${}^L\mc{N}_{q/\check{r}}$ & $\dim {}^L\mc{N}_{q/\check{r}}$ & $|\Delta(\lam_q)|$ \\
\hline
$2$ & $A_1$ &$16$&  $32$ \\
$4$ & $A_2+\tilde{A}_1$ & $34$ & $14$  \\
$6$ & $F_4(a_3)$ &$40$& $8$ \\
$8$& $B_3$&$42$& $6$  \\
$10$& $F_4(a_2)$& $44$& $4$ \\
$12,14,16 $&$F_4(a_1)$&$46$& $2$ \\
$\ge 18$ &$F_4$ &$48$ &$0$ \\
\hline
\end{tabular}
%\\[0.5em] 
\captionof{table}{Data for $F_4$, $(q,\check{r})=\check{r}$}
\label{Tab:F4-coprinc}
}
\end{minipage}

%\newpage

%%%%% E6, E7 AND E8 CASES TOGETHER

\bigskip

\begin{minipage}{.5\textwidth}
\centering
{\tiny
\begin{tabular}{|cccc|}
\hline
$q$ & $\mc{N}_{q}$ & $\dim \mc{N}_{q}$ & $|\Delta(\lam_q)|$ \\
\hline
$2$ & $3A_1$ & $40$&  $32$ \\ 
$3$ & $2A_2+A_1$ & $54$ & $18$  \\
$4$  &$D_4(a_1)$ & $58$& $14$  \\
$5$&$A_4+A_1$ & $62$& $10$  \\
$6,7$&$E_6(a_3)$  & $66$ & $6$  \\
$8$&$D_5$& $68$& $4$  \\
$9,10,11$&$E_6(a_1)$&$70$&$2$\\ 
$\ge 12$ & $E_6$ & $72$&$0$\\
\hline
\end{tabular}
%\\[0.5em] 
\captionof{table}{Data for $E_6$}
\label{Tab:E6-princ}
}
\end{minipage}
\begin{minipage}{.5\textwidth}
\centering
{\tiny
\begin{tabular}{|cccc|}
\hline
$q$ & $\mc{N}_{q}$ & $\dim \mc{N}_{q}$ & $|\Delta(\lambda_q)|$ \\
\hline
$2$ & $4A_1$ & $70$&  $56$ \\ 
$3$ & $2A_2+A_1$ & $90$ & $36$  \\
$4$  &$A_3+A_2+A_1$ & $100$& $26$  \\
$5$&$A_4+A_2$ & $106$& $20$  \\
$6$&$E_7(a_5)$  & $112$ & $14$ \\
$7$&$A_6$&$114$& $12$  \\
$8$&$E_7(a_4)$&$116$&  $10$  \\
$9$&$E_6(a_1)$ &$118$ & $8$  \\
$10,11$&$E_7(a_3)$&$120$& $6$  \\
$12,13$&$E_7(a_2)$&$122$& $4$  \\
$14,\ldots,17$&$E_7(a_1)$&$124$& $2$  \\
$\geq 18$&$E_7$& $126$ & $0$\\
\hline
\end{tabular}
\captionof{table}{Data for $E_7$}
\label{Tab:E7-princ}
}
\end{minipage}

\bigskip

\begin{minipage}{.5\textwidth}
\centering
{\tiny
\begin{tabular}{|cccc|}
\hline
$q$ & $\mc{N}_{q}$ & $\dim \mc{N}_{q}$ & $|\Delta(\lambda_q)|$ \\
\hline
$2$ & $4A_1$ & $128$& $112$ \\
$3$ & $2A_2+A_1$ & $168$ & $72$  \\
$4$ & $2A_3$ & $188$& $52$  \\
$5$ & $A_4+A_3$ & $200$& $40$  \\
$6$ & $E_8(a_7)$  & $208$ &$32$  \\
$7$ & $A_6+A_1$& $212$& $28$  \\
$8$ & $A_7$& $218$& $22$  \\
$9$ & $E_8(b_6)$ & $220$ & $20$  \\
$10,11$ & $E_8(a_6)$ & $224$ &$16$ \\
$12,13$ & $E_8(a_5)$ &  $228$ & $12$ \\ 
$14$  &$E_8(b_4)$ & $230$ & $10$  \\
$15,16,17$&$E_8(a_4)$ & $232$ & $8$  \\
$18,19$&$E_8(a_3)$  & $234$ & $6$  \\
$20,\ldots,23$&$E_8(a_2)$& $236$ & $4$  \\
$24,\ldots,29$&$E_8(a_1)$& $238$ & $2$  \\
$\ge 30$ &$E_8$&$240$&$0$\\
\hline
\end{tabular}
\captionof{table}{Data for $E_8$}
\label{Tab:E8-princ}
}
\end{minipage}

\newcommand{\etalchar}[1]{$^{#1}$}

\bibliographystyle{alpha}
\bibliography{/Users/tomoyuki/Documents/Dropbox/bib/math}

\end{document}